\tikzset{edgee/.style = {->,> = latex'}}
\newcolumntype{P}[1]{>{\centering\arraybackslash}p{#1}}
\newcolumntype{M}[1]{>{\centering\arraybackslash}m{#1}}
\title{\textbf{A Study On The Graph Formulation Of Union Closed Sets Conjecture}}
\author{Nived J M}
\date{} %
\affil{\footnotesize M.Sc Mathematics, 
Indian Institute of Technology Hyderabad, India\\
Email: nivedjm.res@gmail.com
}
\begin{document}

\maketitle

\begin{abstract}
The Union Closed Sets Conjecture is one of the most renowned problems in combinatorics. Its appeal lies in the simplicity of its statement contrasted with the potential complexity of its resolution. The conjecture posits that, in any union closed family of sets, there exists at least one element that appears in at least half of the sets within the family.

We establish the graph-theoretic version of certain set-theoretic results by connecting the set-based and graph-based formulations. We then prove a theorem in which we investigate the conjecture for graphs, focusing on their decompositions and the position of certain pendant vertices. As a result, we extend the validity of the conjecture to a broader class of graph structures.
\end{abstract}

\textbf{{Keywords:}} Union Closed Sets Conjecture, Frequency, Family of sets, Member sets, Pendant vertices.

\section{Introduction}\label{sec1}

The Union Closed Sets Conjecture (UCC), also known as Frankl's Conjecture, was introduced by Peter Frankl in 1979. This conjecture pertains to families of sets where the union of any two member sets remains a member of the family, called \textit{union closed} families. For a given union closed family $\mathcal{F}$, the \textit{universe} $U(\mathcal{F})$ is the union of all its member sets, and the \textit{frequency} of an element $x \in U(\mathcal{F})$ is the count of member sets containing $x$. The conjecture asserts that for every finite union closed family $\mathcal{F}$ ($\neq \{\emptyset\}$), there exists an \textit{abundant} element, defined as an element whose frequency is at least $\frac{|\mathcal{F}|}{2}$. A family is said to satisfy the \textit{UCC condition} if it contains an abundant element.

 Despite over four decades of study, including Gilmer’s\cite{gil} recent work establishing a constant lower bound using entropy methods, the conjecture remains unsolved in its general form. This shows the importance of verifying the conjecture for specific families. Here, we outline known cases where the UCC holds:

\begin{enumerate}
    \item When $|U(\mathcal{F})| \leq 12$.  \label{it1} 
 \cite{12el}
    \item When $|\mathcal{F}| \leq 50$. 
  \cite{50c,50b,12el}  \label{it2}
    \item When $|\mathcal{F}| \leq 2|U(\mathcal{F})|$ and \( A \) is separating, meaning that any two distinct elements have at least one subset containing only one of them. 
  \cite{2m}    \label{it3}
    \item When $|\mathcal{F}| \geq \frac{2}{3}2^{|U(\mathcal{F})|}$.   \cite{2/3}    \label{it4}
    \item When the family includes a set of cardinality 1 or 2.
  \cite{sin,50}    \label{it5}
    \item When each member set has a cardinality of at least $\frac{|U(\mathcal{F})|}{2}$.   \cite{sin}     \label{it6}
\end{enumerate}

In 2013, Bruhn et al. introduced a graph-theoretic approach to the Union Closed Sets Conjecture (UCC) \cite{11}. A subset of vertices in a graph is termed \textit{stable} if no two vertices in it are adjacent. A stable set is \textit{maximal} if no additional vertices can be added without losing this property.

\begin{conjecture}[Graph formulation of UCC]\label{cond}
For any bipartite graph with at least one edge, there exists a vertex in each bipartite class that lies in at most half of the maximal stable sets.
\end{conjecture}

A vertex as described in Conjecture~\ref{cond} is termed \textit{rare}. We say that a bipartite graph satisfies the \textit{UCC condition} if it contains a rare vertex in each of its bipartite classes. This formulation is the dual of the set version of UCC, replacing an element appearing in many sets with a vertex appearing in few maximal stable sets.

In Section \ref{sec3}, we establish a connection between the set-based and graph-based formulations of the conjecture. Using this connection, we present the graph version of the known results as a natural consequence. In Section \ref{sec4}, we study the enumeration of maximal stable sets through graph decompositions, leading to a general theorem that provides new insights into the UCC and establishes its validity for several additional graph classes. Finally, in Section \ref{secf}, we outline potential directions for future research and discuss open problems motivated by our findings.

\section{Connecting The Two Formulations}\label{sec3}

The union closed family generated by a family \( \mathcal{F} \), denoted \( \langle \mathcal{F} \rangle \), consists of the collection of all unions of its member sets, including the empty set \( \emptyset \). For a graph, the set of neighbors of a vertex subset \( P \) is denoted by \( N(P) \). If \( P = \{x\} \) is a singleton, we use \( N(x) \) for convenience\footnote{When multiple graphs are involved, we use \( N_{\scriptscriptstyle G}(P) \), and \( N_{\scriptscriptstyle G}(x) \) respectively, to specify the graph.}. A \textit{pendant vertex} is a vertex of degree 1. For a graph \( G \), we denote the set of all maximal stable sets by \(\mathcal{B}_{\scriptscriptstyle G}\). The cardinality of \(\mathcal{B}_{\scriptscriptstyle G}\) is denoted by \( w_{\scriptscriptstyle G} \), which represents the number of maximal stable sets of \( G \). For any vertex \( x \in V(G) \), the number of maximal stable sets containing \( x \) is represented by \( w_{\scriptscriptstyle G}(x) \). 

Throughout this section, we will refer to the two vertex classes of the bipartite graph \( G \) as \( X \) and \( Y \). We define the \textit{incidence graph} of \( \mathcal{F} \) as the bipartite graph with bipartition \( X = \mathcal{F} \) and \( Y = U(\mathcal{F}) \), and edge set \( \{ (S, x) : S \in \mathcal{F},\, x \in U(\mathcal{F}),\, x \in S \} \). For a bipartite graph \( G(X,Y) \), we refer to the \textit{incidence family} of \( X \) as \( \mathcal{F}^{\scriptscriptstyle X} = \{ N(y) : y \in Y \} \). If there are no isolated vertices\footnote{We can omit isolated vertices without affecting the conjecture, since an isolated vertex lies in every maximal stable set and is therefore abundant. Its presence is redundant: removing it does not change the number of maximal stable sets or the frequency of any other vertices.} in \( X \), then its universe is given by \( U(\mathcal{F}^{\scriptscriptstyle X}) = X \). As an example, Figure~\ref{fig:p1} illustrates the incidence graph corresponding to the family  \(
\mathcal{F}^{\scriptscriptstyle X} = \big\{ \{1,2,3\}, \{2\}, \{2,3\}, \{3,4\}, \{4,5\} \big\}.
\)  
In this graph, the vertices \( 2, 3, \) and \( 4 \) are rare, and correspondingly, the elements \( 2, 3, \) and \( 4 \) are abundant in the union closed family generated by \( \mathcal{F}^{\scriptscriptstyle X} \).

The following proposition gives a direct connection between the set and graph formulations of the UCC, showing that an abundant element in \(\langle \mathcal{F}^{\scriptscriptstyle X} \rangle\) corresponds exactly to a rare vertex in \(X\). In \cite{11} this is proved using the incidence graph of a union closed family. Here we instead use the incidence graph of a generating family, giving a more general result. The proof is inspired by \cite{11}, with the correspondence stated explicitly.

\begin{proposition}\label{prop31}
    Let $G$ be a bipartite graph with vertex partition $X\cup Y$. A vertex $x\in X$ is rare if and only if it is abundant in $\langle \mathcal{F}^{\scriptscriptstyle X} \rangle$.
\end{proposition}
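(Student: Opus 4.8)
The plan is to exhibit an explicit bijection $\Phi$ between the maximal stable sets of $G$ and the member sets of $\langle \mathcal{F}^{\scriptscriptstyle X}\rangle$ under which membership of the fixed vertex $x$ is exactly reversed, and then to read off the equivalence by a one-line counting argument. The guiding picture is that a maximal stable set $S$ is determined by its $X$-part: the $Y$-part must consist of precisely those $y$ with $N(y)$ disjoint from $S\cap X$, and maximality forces $X\setminus(S\cap X)$ to be a union of neighbourhoods.

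First I would define $\Phi(S) = X \setminus (S \cap X)$ for a maximal stable set $S$, with candidate inverse $\Psi(T) = (X \setminus T) \cup \{\, y \in Y : N(y) \subseteq T \,\}$ for $T \in \langle \mathcal{F}^{\scriptscriptstyle X}\rangle$. The key structural fact, which uses that $Y$ is a stable set (so a vertex outside $S$ that is blocked by $S$ is blocked through the opposite class), is that $X \setminus (S\cap X) = N(S\cap Y) = \bigcup_{y \in S\cap Y} N(y)$ for every maximal stable $S$: the inclusion $\supseteq$ is stability of $S$, and $\subseteq$ is maximality, since a vertex $x' \in X\setminus S$ must have a neighbour in $S$ and that neighbour necessarily lies in $S\cap Y$. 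Hence $\Phi(S)$ is a union of member sets of $\mathcal{F}^{\scriptscriptstyle X}$ — or $\emptyset$ in the case $S\cap Y = \emptyset$ — so $\Phi(S) \in \langle \mathcal{F}^{\scriptscriptstyle X}\rangle$.

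Next I would verify that $\Psi(T)$ is a maximal stable set: it is stable because $G$ has no edge inside $X$, none inside $Y$, and none between $X\setminus T$ and $\{\,y : N(y)\subseteq T\,\}$ by construction; and it is maximal because, writing $T = \bigcup_{y\in Y'} N(y)$, every $x'\in T$ lies in some $N(y_0)$ with $y_0\in Y'$, whence $N(y_0)\subseteq T$ and $x'$ is adjacent to $y_0 \in \Psi(T)$, while every $y\notin\Psi(T)$ has $N(y)\not\subseteq T$ and so a neighbour in $X\setminus T \subseteq \Psi(T)$. I would then check that $\Phi$ and $\Psi$ are mutually inverse: $\Phi(\Psi(T)) = \bigcup_{N(y)\subseteq T} N(y) = T$ (one inclusion is trivial, the other uses that $T$ is a union of sets $N(y)$ each contained in $T$), and $\Psi(\Phi(S))$ has $X$-part equal to $S\cap X$ immediately, while its $Y$-part $\{\,y : N(y)\cap(S\cap X)=\emptyset\,\}$ equals $S\cap Y$ by maximality of $S$ (any such $y$ outside $S$ could be added to $S$).

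Finally, because $x\in X$ we have $x\in S \iff x\notin \Phi(S)$, so $\Phi$ restricts to a bijection between the maximal stable sets of $G$ avoiding $x$ and the member sets of $\langle \mathcal{F}^{\scriptscriptstyle X}\rangle$ containing $x$; in particular $|\langle \mathcal{F}^{\scriptscriptstyle X}\rangle|$ equals the total number $m$ of maximal stable sets of $G$. Thus $x$ is rare $\iff$ at most $m/2$ maximal stable sets contain $x$ $\iff$ at least $m/2$ members of $\langle \mathcal{F}^{\scriptscriptstyle X}\rangle$ contain $x$ $\iff$ $x$ is abundant in $\langle \mathcal{F}^{\scriptscriptstyle X}\rangle$. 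I expect the only real work to be the bookkeeping: handling the $S\cap Y = \emptyset$ (equivalently $T=\emptyset$) case cleanly, and checking well-definedness and maximality of $\Psi$ using bipartiteness; once the bijection is in place the equivalence is immediate.
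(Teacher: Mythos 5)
Your proof is correct and follows essentially the same route as the paper: identifying each maximal stable set with its $X$-part, whose complement $N(S\cap Y)$ ranges exactly over $\langle \mathcal{F}^{\scriptscriptstyle X}\rangle$, and then reading off rarity versus abundance by complementation. Your write-up is in fact more explicit than the paper's (you verify the inverse map $\Psi$ and surjectivity, which the paper leaves implicit), but the underlying argument is the same.
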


\begin{proof}
Consider any subset of vertices from \(Y\), denoted as \(Y^{\prime}\). It can be observed that there exists a unique maximal stable set in \(G\), denoted as \(S\), such that \(S\cap X = X\setminus N(Y^{\prime})\). Furthermore, every maximal stable set \(S\) satisfies \(S\cap X = X\setminus N(Y^{\prime})\) for some \(Y^{\prime} \subseteq Y\). Hence \(\{X\setminus N(Y^{\prime}) \mid Y^{\prime} \subseteq Y\} = \{S \cap X \mid S \in \mathcal{B}_{\scriptscriptstyle G}\}\). It is important to note that no two maximal stable sets \(S_{\scriptscriptstyle 1}\) and \(S_{\scriptscriptstyle 2}\) can have \(S_{\scriptscriptstyle 1}\cap X = S_{\scriptscriptstyle 2}\cap X\). Therefore, the set of maximal stable sets is in one-to-one correspondence with \(\{S \cap X \mid S \in \mathcal{B}_{\scriptscriptstyle G}\} = \{X\setminus N(Y^{\prime}) \mid Y^{\prime} \subseteq Y\}\). Note that, a vertex \(x \in X\) is rare in \(G\) if and only if it is rare in the family \(\{X\setminus N(Y^{\prime}) \mid Y^{\prime} \subseteq Y\}\), which holds if and only if \(x\) is abundant in the family \(\{N(Y^{\prime}) \mid Y^{\prime} \subseteq Y\}\). Now, let $F=\{N(y)\hspace{.2 cm}\vert \hspace{.2 cm} y\in Y^{\prime}\}$, and observe that the universe $U(F)$ is nothing but $N(Y^{\prime})$. It is also evident that $\{ N(Y^{\prime}) \hspace{.2 cm}\vert \hspace{.2 cm} Y^{\prime}\subseteq Y\}=\{U(F)\hspace{.2 cm}\vert \hspace{.2 cm}F\subseteq\mathcal{F}^{\scriptscriptstyle X}\}=\langle \mathcal{F}^{\scriptscriptstyle X} \rangle$.
\end{proof}

\begin{figure}[h!]
\centering
\begin{minipage}{.475\textwidth}
  \centering

 \includegraphics[width=0.99\textwidth]{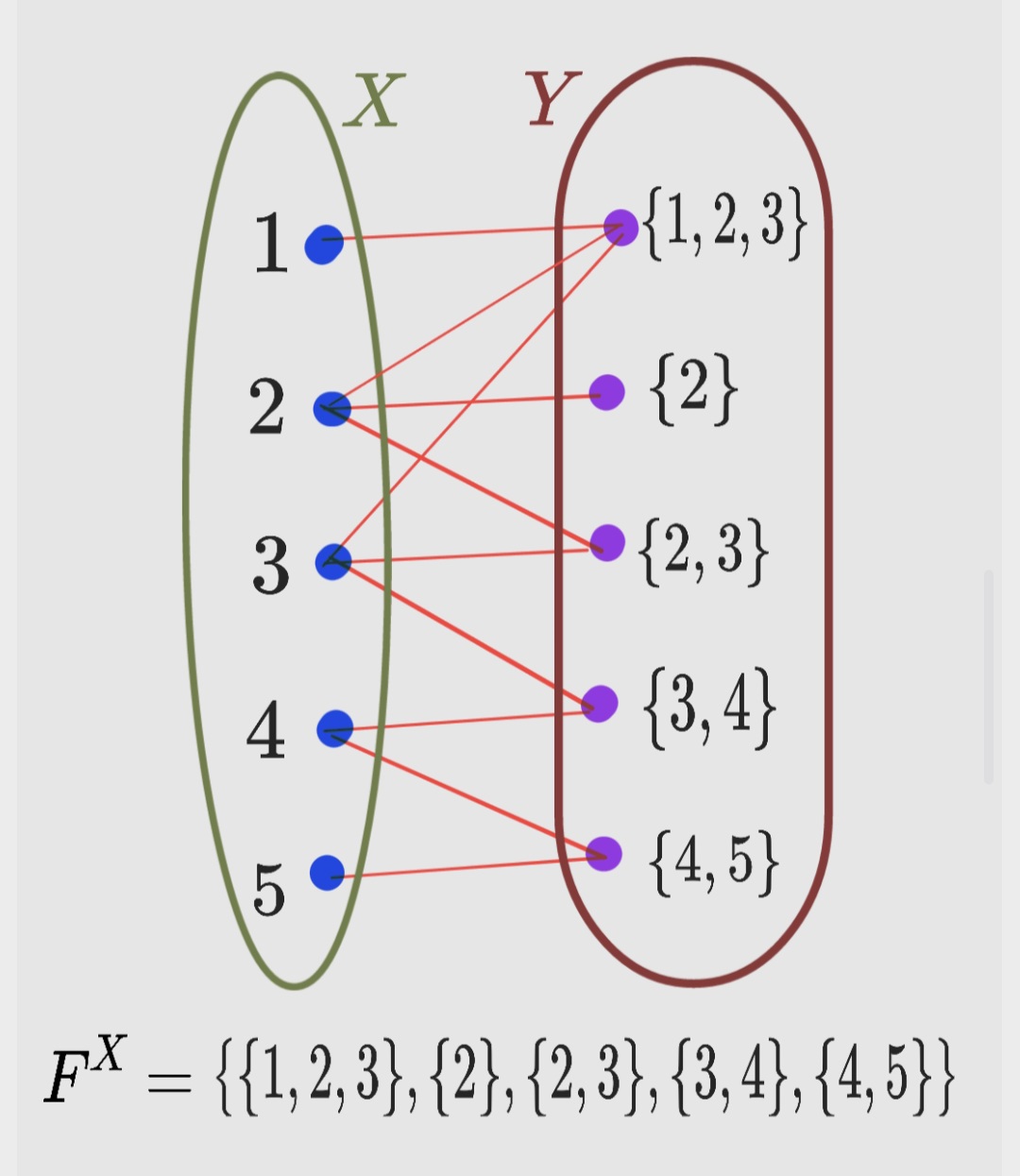}

 \caption{An illustration of an incidence family together with its corresponding incidence graph.}

  \label{fig:p1}

\end{minipage}%
\hfill
\begin{minipage}{.485\textwidth}
  \centering
 
  \includegraphics[width=0.95\textwidth]{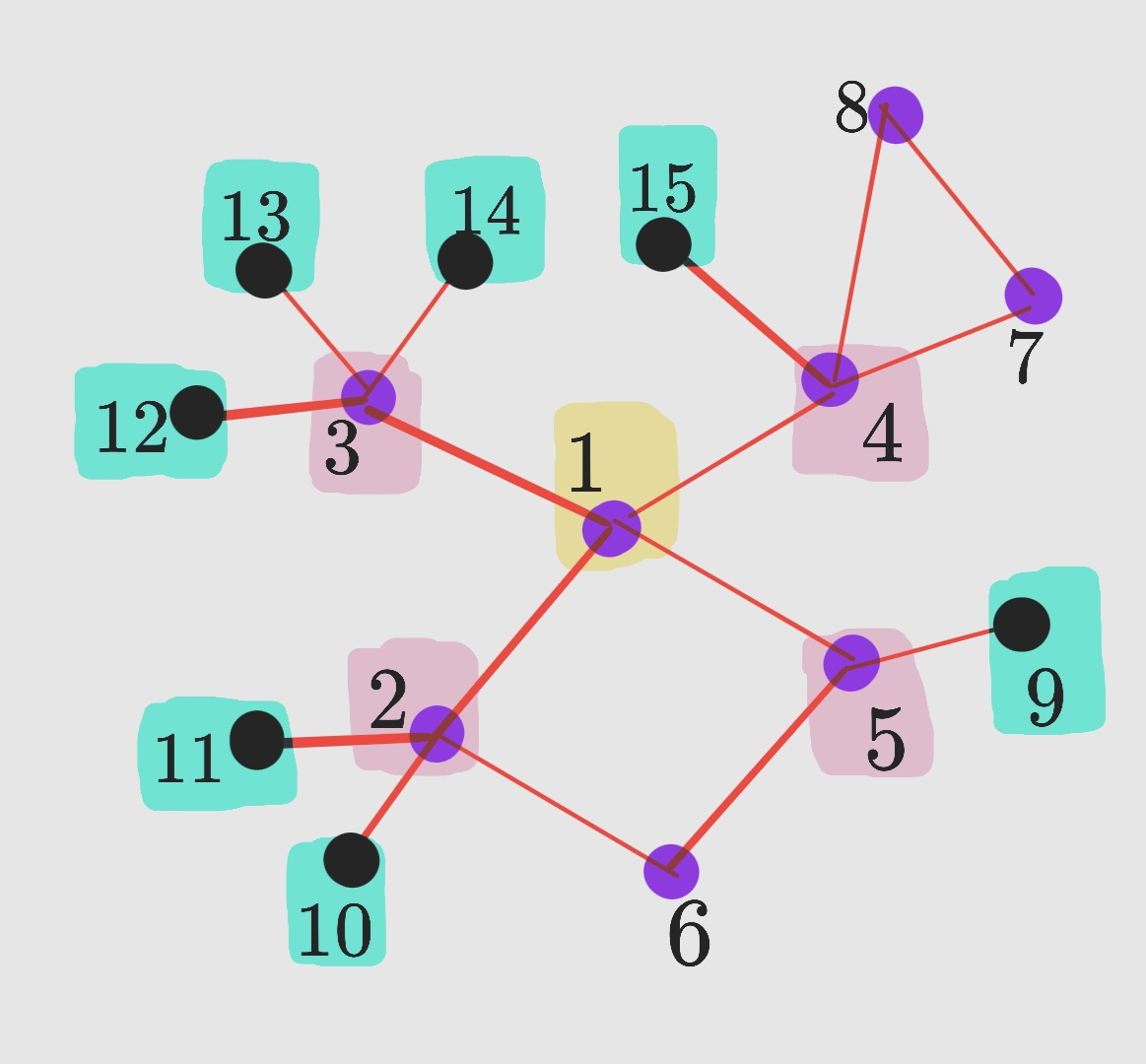}

\caption{Illustration of a 2-layered vertex in a graph, where the vertex labeled 1 is 2-layered.}

  \label{fig:p9}
 
\end{minipage}
\end{figure}

The following are several observations, comprising graph-theoretic adaptations of certain established results.

\begin{itemize}
    \item If $G$ is a bipartite graph with one of its two classes containing at most 12 vertices, then there exists a rare vertex within the same class. (corresponding to Result \ref{it1})
    \item  A bipartite graph with both classes of cardinality at most 12 satisfies UCC condition. (follows immediately from the previous bullet point; corresponds to Result~\ref{it1}.)
    \item In a twin-free\footnote{No two vertices share the same neighborhood} bipartite graph $G=(X,Y)$, if $2^{\scriptscriptstyle |Y|} \leq 2|X|$, then there exists a rare vertex in $X$.  (corresponding to Result \ref{it3})
    \item In a twin-free bipartite graph, if $|Y|\geq \tfrac{2}{3}2^{\scriptscriptstyle |X|}$, then the graph satisfies the UCC condition. (Existence of a rare vertex in $X$ follows directly from Result \ref{it4}, and the existence of a rare vertex in $Y$ follows directly from the previous bullet point.)
    \item  If a bipartite graph $G$ contains a pendant vertex, then its neighbor is rare in $G$. Additionally, if $G$ contains a vertex of degree 2, then there exists a rare vertex in the other bipartite class.    (Observed in \cite{11}; also follows from the set version, Result \ref{it5})
    \item  Let $\mathcal{F}$ be a family in which every member set has a maximum size of 3, and each element within the sets has a frequency that does not exceed 3. Then $\langle \mathcal{F}\rangle$ satisfies UCC condition.   (follows from the result that subcubic graphs satisfy the UCC condition \cite{11})
    \item Let $G$ be a bipartite graph with bipartite classes $X$ and $Y$. If $deg(a)\geq\frac{|X|}{2}$ for every vertex $a\in Y$, then there is a rare vertex in $X$.  (corresponding to Result \ref{it6})
    \item  Let $G$ be a bipartite graph with a minimum degree of $\delta$. If both bipartite classes have cardinalities at most $2\delta$, then $G$ satisfies UCC condition.  (Follows directly from the previous bullet point; corresponds to Result~\ref{it6})
    \item  An $r$-regular bipartite graph with both of its bipartite classes having cardinality at most $2r$ satisfies UCC condition. (Consequence of the previous 2 bullet points; corresponds to Result \ref{it6})
\end{itemize}

 In the subsequent section, our focus will be on graphs that possess pendant vertices positioned in specific locations.

\section{Some New Graph Results}\label{sec4}

In this section, we focus on enumerating the number of maximal stable sets of a graph by decomposing it into smaller subgraphs. We begin with the simplest case, when the graph is disconnected, which is addressed in the following proposition. We then move to a more general setting, where the graph is decomposed into two subgraphs without common edges but whose union forms the parent graph. By analyzing the properties of their common vertices, we introduce a new approach to compute the maximal stable sets of the parent graph. Later, we present a result derived directly from this theorem, which establishes the UCC for several new classes of graphs.

\begin{proposition}\label{prop1}
The total number of maximal stable sets of a graph G is the product of the numbers of maximal stable sets of its connected components. Moreover, in a disconnected bipartite graph G, a vertex is rare if and only if it is rare in the connected component to which it belongs.
\end{proposition}

\begin{proof}
    Suppose \(P\) is a component of \(G\) and set \(Q = G - P\). Let $S_{\scriptscriptstyle 1}$ and $S_{\scriptscriptstyle 2}$ be two maximal stable sets of $P$ and $Q$ respectively. One can notice that $S_{\scriptscriptstyle 1} \cup S_{\scriptscriptstyle 2}$ is a maximal stable set of $G$. For any maximal stable set $S$ of $G$, observe that $S \cap V(P)$ and $S \cap V(Q)$ are maximal stable sets of $P$ and $Q$ respectively. This gives the number of maximal stable sets of $G$, $w_{\scriptscriptstyle G} = w_{\scriptscriptstyle P} w_{\scriptscriptstyle Q}$. One can see that a maximal stable set $S$ of $G$ contains a vertex $x \in V(P)$ if and only if $S$ is of the form $S_{1} \cup S_{2}$ where $S_{1}$ and $S_{2}$ are maximal stable sets of $P$ and $Q$ respectively and $x \in S_{1}$. Hence, for every $x \in V(P)$, $w_{\scriptscriptstyle G}(x) = w_{\scriptscriptstyle P}(x) w_{\scriptscriptstyle Q}$.

Now, let $G$ be a bipartite graph and $r$ be a vertex in $P$. Then by definition, $r$ is rare in $P$ if and only if $w_{\scriptscriptstyle P} - 2w_{\scriptscriptstyle P}(r) \geq 0$. As a result, $w_{\scriptscriptstyle G} - 2w_{\scriptscriptstyle G}(r) = w_{\scriptscriptstyle P} w_{\scriptscriptstyle Q} - 2w_{\scriptscriptstyle P}(r) w_{\scriptscriptstyle Q} = (w_{\scriptscriptstyle P} - 2w_{\scriptscriptstyle P}(r)) w_{\scriptscriptstyle Q} \geq 0,$ which shows that $r$ is rare in $G$ if and only if $r$ is rare in $P$.
\end{proof}



We now present the main structural result. A \textit{decomposition} of a graph $G$ is a set of edge-disjoint subgraphs $H_{1}, H_{2}, \ldots, H_{n}$ such that $\bigcup_{\scriptscriptstyle i=1}^{\scriptscriptstyle n} H_{i} = G$. A vertex $v \in V(G)$ is defined as \textit{2-layered} if, for every neighbor $u$ of $v$, there exists a pendant vertex adjacent to $u$ that is different from $v$. For instance, in Figure~\ref{fig:p9}, the vertex labeled \( 1 \) is 2-layered. The vertex \( 1 \) has four neighbors \( 2, 3, 4, \) and \( 5 \). Each of these four vertices has at least one pendant neighbor.

\begin{theorem}\label{thm4}
    Let $G$ be a graph and let $\{H, I\}$ be a decomposition of $G$. Suppose that the vertices in $V(H)\cap V(I)$ form a stable set and are all $2$-layered. Then the total number of maximal stable sets of $G$ is the product of the numbers of maximal stable sets of $H$ and $I$.
\end{theorem}

The following consequence is established by the same argument as Proposition \ref{prop1}, using Theorem \ref{thm4}, which proves the UCC condition for some classes of graphs.

\begin{theorem}\label{thm42}
    Let $G$ be a bipartite graph and $\{H,I\}$ a decomposition of $G$. Suppose the vertices in $ V(H) \cap  V(I)$ belong to the same bipartite class of $G$ and are all 2-layered. Then, all rare vertices in $H$ or $I$ will remain rare in $G$.
\end{theorem}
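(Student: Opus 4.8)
The plan is to analyze how maximal stable sets of $G$ decompose along the shared vertex set $W := V(C) \cap V(H)$, and to exploit the 2-layered hypothesis to control the ``interaction'' between the $C$-side and the $H$-side. Write $W$ inside one bipartite class, say $X$; for each vertex $w \in W$, fix a neighbour $u_w \in N_G(w)$ together with a pendant vertex $p_w$ adjacent to $u_w$. The first step is the structural lemma: since $p_w$ is pendant at $u_w$, in any maximal stable set $S$ exactly one of $u_w, p_w$ lies in $S$; and whenever $u_w \notin S$ we may force $w$ out of consideration, while whenever $p_w \in S$ but $u_w \notin S$ there is freedom at $w$. I would partition $\mathcal{B}_G$ according to the subset $A \subseteq W$ of shared vertices that lie in $S$, and show that the 2-layered condition makes the choices on the $C \setminus W$ part and on the $H \setminus W$ part essentially independent once $A$ is fixed — i.e. $w_G = \sum_{A} \alpha_C(A)\,\alpha_H(A)$ where $\alpha_C(A)$ counts extensions of $A$ to maximal stable sets of $C$ (with an appropriate ``boundary'' convention coming from the pendants) and similarly for $H$.

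The second step is to turn this product decomposition into the inequality for a rare vertex. Suppose $r$ is rare in $C$, so $w_C - 2 w_C(r) \ge 0$; by the same partition applied to $C$ alone we get $w_C = \sum_A \alpha_C(A)\,\beta_C(A)$ and $w_C(r) = \sum_A \alpha_C(A,r)\,\beta_C(A)$ for suitable boundary multiplicities $\beta_C(A)$ arising from the pendant vertices attached at the cut. The key point the 2-layered hypothesis should buy us is that the boundary multiplicity of $W$ seen from $H$, namely $\alpha_H(A)$, is — up to the same nonnegative common factor — proportional to the $\beta_C(A)$ that already appears in the count for $C$ in isolation, or at least that $\alpha_H(A) \ge 0$ and the ratio $w_G(r)/w_G$ can be bounded by $w_C(r)/w_C$ via a Chebyshev/rearrangement-type argument on the sums $\sum_A \alpha_C(A,r)\alpha_H(A)$ versus $\sum_A \alpha_C(A)\alpha_H(A)$. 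Concretely I would try to show termwise that $\alpha_C(A,r) \le \tfrac12 \alpha_C(A)$ fails in general, so instead aggregate: the pendant at each $u_w$ guarantees a maximal stable set ``reflection'' that pairs a stable set containing $r$ with one avoiding $r$ inside the same block $A$, mirroring the argument behind Result~\ref{it5} (pendant vertex $\Rightarrow$ neighbour is rare), and this pairing is compatible with the $H$-side because $A$ is unchanged.

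The third step is the reduction to the connected / Proposition~\ref{prop1} setting and cleaning up degenerate cases: if $C$ or $H$ is disconnected, or if $W = \emptyset$, the statement collapses to Proposition~\ref{prop1}; if $W$ meets only trivial components the claim is immediate. One must also check the base case where $r \in W$ itself (then $r$ is 2-layered, but more usefully its own neighbours carry pendants, so $r$ is rare in $G$ directly by the pendant argument, independently of the decomposition) and the case $r \in V(C)\setminus W$, which is the main one handled by the product formula.

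The main obstacle I expect is proving the product decomposition $w_G = \sum_A \alpha_C(A)\alpha_H(A)$ with the \emph{correct} boundary conventions so that the same $\alpha$'s reappear when $C$ and $H$ are considered alone — in an ordinary graph join across a vertex cut the counts of maximal stable sets do \emph{not} multiply, because a vertex $w$ in the cut that is excluded from $S$ needs a neighbour of $S$ on \emph{at least one} side, not on a prescribed side, creating coupling. The 2-layered hypothesis is precisely the device that kills this coupling: the pendant vertex $p_w$ hanging off each neighbour $u_w$ of $w$ means that, in a maximal stable set, either $u_w \in S$ (so $w$ is blocked ``for free,'' no demand on the other side) or $p_w \in S$ and then $u_w \notin S$ is allowed, decoupling the two sides. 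Making this airtight — especially ensuring maximality is neither lost nor double-counted when we glue a $C$-extension to an $H$-extension over a common $A$ — is where the real work lies, and I would spend most of the proof on a careful bijection between $\mathcal{B}_G$ and the appropriate fibered product of $\mathcal{B}_C$-data and $\mathcal{B}_H$-data indexed by $A \subseteq W$.
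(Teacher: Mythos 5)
Your overall outline (partition $\mathcal{B}_G$ over the cut $W=V(C)\cap V(H)$, use the pendants to decouple the two sides, then transfer rarity) is the right strategy in spirit, but the key identity you build on is wrong as stated, and the part you defer --- the ``appropriate boundary convention'' --- is precisely where the content of the theorem lies. The fibered-product formula $w_G=\sum_{A\subseteq W}\alpha_C(A)\,\alpha_H(A)$, with $\alpha_C(A)$ counting maximal stable sets of $C$ meeting $W$ exactly in $A$, fails: take $G$ the path $p_1u_1wu_2p_2$ with $C=p_1u_1w$, $H=wu_2p_2$ (so $W=\{w\}$ and $w$ is 2-layered). Here $w_G=4$, while the fibered sum gives $1\cdot 1+1\cdot 1=2$; the culprit is a set such as $\{p_1,u_2\}$, whose restriction to $C$ is $\{p_1\}$, not maximal in $C$. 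Your proposed decoupling (``either $u_w\in S$, or $p_w\in S$ and $u_w\notin S$'') does not repair this, because the problem is an excluded cut vertex dominated only on the $H$ side, not the status of a single designated neighbour. What the 2-layered hypothesis actually buys (and what the paper's Lemmas \ref{lem4}--\ref{lem7} carry out) is different: one records, for each excluded cut vertex, \emph{which side fails to dominate it} (the sets $\Gamma,\Psi$), and uses the fact that the pendants attached to $N_C(\Gamma)$ are forced into the set to show that adding the undominated cut vertices back to the $C$-restriction produces a maximal stable set of $C$; after a change of variables the double sum telescopes to the clean identities $w_G=w_C\,w_H$ (all pairs, \emph{not} pairs agreeing on $W$), $w_G(b)=w_C(b)\,w_H$ for $b\notin W$, and $w_G(a)\le w_C(a)\,w_H(a)$ for $a\in W$, from which rarity transfers immediately. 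Your second step never reaches any such identity; the proportionality/Chebyshev suggestions are left as hopes, and you concede the termwise bound fails.

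There is also a concretely false claim in your third step: that a rare vertex $r\in W$ is rare in $G$ ``directly by the pendant argument'' because it is 2-layered. Being 2-layered does not imply being rare (it implies $r$'s \emph{neighbours} are rare, not $r$). Counterexample: let $r$ have unique neighbour $u$, attach a pendant $p$ to $u$, and join $u$ to the endpoints $a_1,\dots,a_m$ of $m$ disjoint edges $a_jb_j$. Then $r$ is 2-layered, the maximal stable sets are $\{u,b_1,\dots,b_m\}$ together with the $2^m$ sets containing $r$, $p$ and one endpoint of each edge $a_jb_j$, so $w_G(r)=2^m$ out of $w_G=2^m+1$, and $r$ is far from rare. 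The cut-vertex case genuinely needs the inequality $w_G(r)\le w_C(r)\,w_H(r)$ combined with rarity of $r$ in $C$, i.e.\ the counting machinery you have not established. As it stands the proposal identifies the coupling obstacle correctly but does not overcome it, so the proof is incomplete.
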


Before presenting the proofs of the theorems, we first establish some notation and lemmas. The following notation will be used throughout the section. We define $[n] = \{i \in \mathbb{N} \mid i \leq n\}$, and for any subset $\Theta \subseteq [n]$, we denote its complement by $\Theta^{\mathsf{c}} = [n] \setminus \Theta$. Recall the definitions of the set of all maximal stable sets $\mathcal{B}_G$ and its cardinality $w_G$.
Here, we will slightly generalize these definitions. For any vertex subsets \( P, Q \subseteq V(G) \) and any subgraph \( K \) of \( G \), the notation \( \mathcal{B}_{\scriptscriptstyle K}(P, \overline{Q}) \) denotes the set of all maximal stable sets of \( K \) that include all vertices in \( P \) and exclude all vertices in \( Q \). The cardinality of \( \mathcal{B}_{\scriptscriptstyle K}(P, \overline{Q}) \) is denoted by \( w_{\scriptscriptstyle K}(P, \overline{Q}) \). We write $K \setminus P$ to denote the subgraph obtained by removing the vertices in $P$ and all edges incident to them from $K$.

\begin{figure}[h!]
\centering
\begin{minipage}{.425\textwidth}
  \centering

 \includegraphics[width=0.95\textwidth]{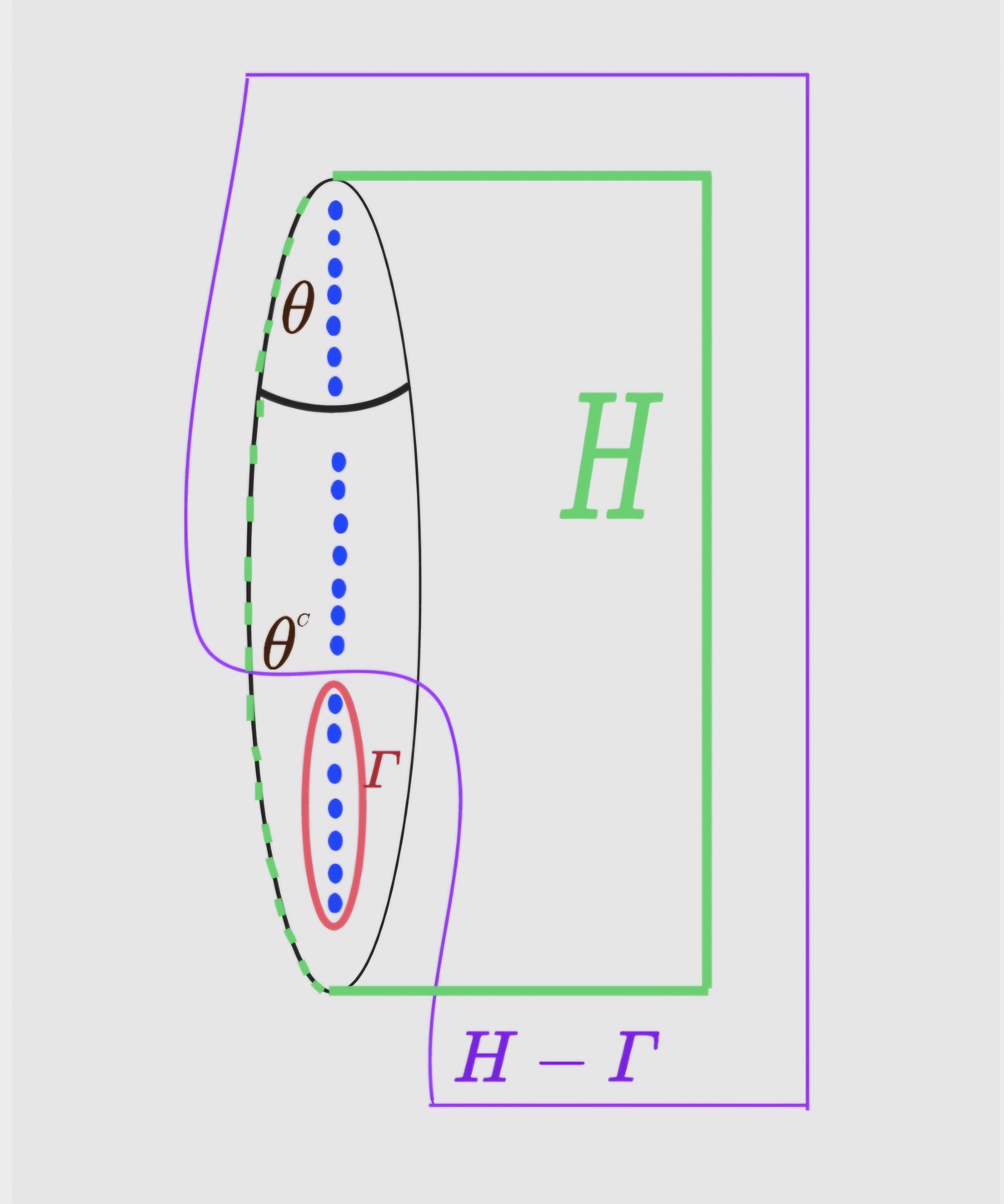}
 
 \caption{Illustration of Lemma \ref{lem4} with graph \(H\), stable set $[n]$ consisting of all 2-layered vertices, and the subset \(\Gamma \subseteq \Theta^{\mathsf{c}} \).}
  \label{fig:p1in}

\end{minipage}%
\hfill
\begin{minipage}{.525\textwidth}
  \centering
 
  \includegraphics[width=0.95\textwidth]{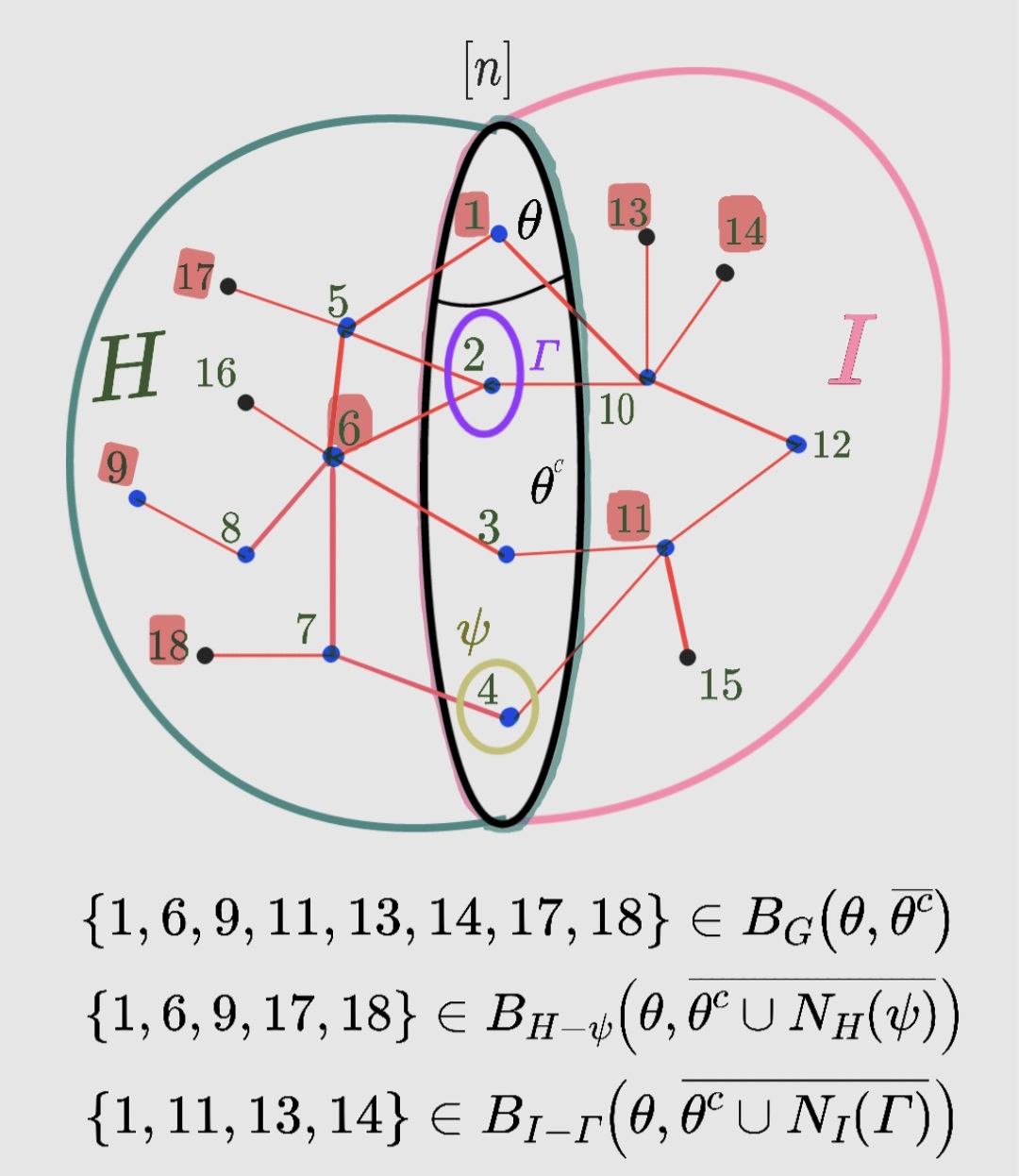}

\caption{An illustration of Lemma \ref{lem7}, showing the decomposition of graph \(G\) into subgraphs \(H\) and \(I\). The common vertex set \([n]\) is highlighted together with its partition into \(\Theta\), \(\Theta^{\mathsf{c}}\), \(\Gamma \subseteq \Theta^{\mathsf{c}}\), and \(\Psi \subseteq \Theta^{\mathsf{c}} \setminus \Gamma\).}

  \label{fig:p2}
 
\end{minipage}
\end{figure}

\begin{lemma}\label{lem4}
  Let $H$ be a graph with $[n] \subseteq V(H)$ such that $[n]$ is a stable set and all vertices of $[n]$ are 2-layered. Let $\Theta \subseteq [n]$, $\Gamma \subseteq \Theta^{\mathsf{c}}$, and $a \in V(H) \setminus \Gamma$. If $\Gamma$ contains no pendant vertices, then the following identities hold:
\begin{equation}\label{eq1}
    w_{\scriptscriptstyle H\setminus\Gamma}(\Theta, \overline{\Theta^{\mathsf{c}}\cup N_{\scriptscriptstyle H}(\Gamma)}) = w_{\scriptscriptstyle H}(\Theta \cup \Gamma, \overline{\Theta^{\mathsf{c}} \setminus \Gamma})
\end{equation}
\begin{equation}\label{eq2}
    w_{\scriptscriptstyle H\setminus\Gamma}(\{a\}\cup \Theta, \overline{\Theta^{\mathsf{c}}\cup N_{\scriptscriptstyle H}(\Gamma)}) = w_{\scriptscriptstyle H}(\{a\}\cup \Theta \cup \Gamma, \overline{\Theta^{\mathsf{c}} \setminus \Gamma})
\end{equation}
\end{lemma}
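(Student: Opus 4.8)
\textbf{Proof plan for Lemma \ref{lem4}.}
The plan is to prove \eqref{eq1} by exhibiting a bijection between $\mathcal{B}_{\scriptscriptstyle C\setminus\Gamma}(\Theta, \overline{\Theta^{\mathsf{c}}\cup N_{\scriptscriptstyle C}(\Gamma)})$ and $\mathcal{B}_{\scriptscriptstyle C}(\Theta \cup \Gamma, \overline{\Theta^{\mathsf{c}} \setminus \Gamma})$, and then observe that the same map, with the point $b$ carried along, restricts to a bijection establishing \eqref{eq2}. The natural candidate in one direction is $S \mapsto S \cup \Gamma$: given a maximal stable set $S$ of $C\setminus\Gamma$ that contains all of $\Theta$ and avoids all of $\Theta^{\mathsf{c}}$ as well as every neighbor of $\Gamma$, adjoining $\Gamma$ should yield a maximal stable set of $C$ containing $\Theta\cup\Gamma$ and avoiding $\Theta^{\mathsf{c}}\setminus\Gamma$. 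In the other direction, the map is $T \mapsto T \setminus \Gamma = T \cap V(C\setminus\Gamma)$.

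The key steps, in order, are as follows. First I would check that $S\cup\Gamma$ is stable in $C$: within $S$ there are no edges since $S$ is stable in $C\setminus\Gamma$, within $\Gamma$ there are no edges because $\Gamma\subseteq\Theta^{\mathsf c}\subseteq[n]$ lies in a single bipartite class (the common vertices are all in one class by hypothesis, a fact inherited here), and there is no edge between $S$ and $\Gamma$ precisely because $S$ avoids $N_{\scriptscriptstyle C}(\Gamma)$. Second, I would verify maximality of $S\cup\Gamma$ in $C$: any vertex $v\in V(C)\setminus(S\cup\Gamma)$ either lies in $N_{\scriptscriptstyle C}(\Gamma)$ — hence is dominated by $\Gamma$ — or lies in $V(C\setminus\Gamma)\setminus S$, in which case maximality of $S$ in $C\setminus\Gamma$ gives a neighbor of $v$ in $S$. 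Third, for the reverse map I would check that if $T$ is a maximal stable set of $C$ with $\Gamma\subseteq T$ and $T\cap(\Theta^{\mathsf c}\setminus\Gamma)=\emptyset$, then $T\supseteq\Gamma$ forces $T\cap N_{\scriptscriptstyle C}(\Gamma)=\emptyset$, so $T\setminus\Gamma$ avoids $\Theta^{\mathsf c}\cup N_{\scriptscriptstyle C}(\Gamma)$, contains $\Theta$, and is stable in $C\setminus\Gamma$; its maximality in $C\setminus\Gamma$ follows because any vertex of $C\setminus\Gamma$ outside $T\setminus\Gamma$ has a $T$-neighbor, and that neighbor cannot be in $\Gamma$ (else the vertex would be in $N_{\scriptscriptstyle C}(\Gamma)$, but $T$ avoids $N_{\scriptscriptstyle C}(\Gamma)$ since $\Gamma\subseteq T$), so the neighbor lies in $T\setminus\Gamma$. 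Fourth, these two maps are visibly mutually inverse, giving the cardinality identity \eqref{eq1}. For \eqref{eq2} I would simply note that $b\in V(C\setminus\Gamma)$, that $b\notin\Gamma$, and that both maps preserve membership of $b$ (since $b$ is untouched), so the bijection restricts to one between the subfamilies whose members contain $b$, which is exactly \eqref{eq2}.

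The main obstacle I anticipate is the bookkeeping around maximality: one must be careful that adjoining $\Gamma$ does not accidentally make some previously-needed ``dominating'' neighbor redundant in a way that breaks the correspondence, and conversely that deleting $\Gamma$ from $T$ does not leave some vertex of $C\setminus\Gamma$ undominated. Both issues are resolved by the single observation that $\Gamma\subseteq T$ is equivalent (for a stable set $T$) to $T\cap N_{\scriptscriptstyle C}(\Gamma)=\emptyset$ together with stability, so the ``slack'' created by removing $\Gamma$ is exactly absorbed by the freed-up vertices of $N_{\scriptscriptstyle C}(\Gamma)$ — but those are excluded on the left-hand side anyway. A secondary, purely notational, point worth stating explicitly is that $\Gamma$ and $\Theta^{\mathsf c}\setminus\Gamma$ are disjoint and $\Gamma\cup(\Theta^{\mathsf c}\setminus\Gamma)=\Theta^{\mathsf c}$, so the inclusion/exclusion constraints on the two sides genuinely match up; once this is spelled out the rest is routine.
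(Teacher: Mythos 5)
Your forward map and its verification are fine, but the reverse direction has a genuine gap, and it is precisely the point where the paper must invoke the standing hypothesis that the common vertices are 2-layered --- a hypothesis your argument never uses. In your third step you claim that for a vertex $v$ of $C\setminus\Gamma$ outside $T\setminus\Gamma$, its $T$-neighbour cannot lie in $\Gamma$, ``else $v$ would be in $N_{\scriptscriptstyle C}(\Gamma)$, but $T$ avoids $N_{\scriptscriptstyle C}(\Gamma)$''. That is a non sequitur: $v\notin T$, so the fact that $T$ avoids $N_{\scriptscriptstyle C}(\Gamma)$ says nothing about $v$. A vertex $v\in N_{\scriptscriptstyle C}(\Gamma)$ may well have \emph{all} of its $T$-neighbours inside $\Gamma$; then $T\setminus\Gamma$ no longer dominates $v$ in $C\setminus\Gamma$, so $T\setminus\Gamma$ is not maximal there and your inverse map does not land in the left-hand family. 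Without the 2-layered assumption the identity in Equation~\ref{eq1} is actually false: take $C$ to be a single edge $au$ with $a\in[n]$, $\Theta=\emptyset$, $\Gamma=\{a\}$. The right-hand side counts the maximal stable set $\{a\}$ of $C$, hence equals $1$, while $C\setminus\Gamma$ is the isolated vertex $u$, whose unique maximal stable set $\{u\}$ fails to avoid $N_{\scriptscriptstyle C}(\Gamma)=\{u\}$, so the left-hand side is $0$ (here $u$ has no pendant neighbour, so $a$ is not 2-layered). Your concluding remark that the ``slack'' created by removing $\Gamma$ is absorbed because $N_{\scriptscriptstyle C}(\Gamma)$ is excluded on the left-hand side conflates the exclusion constraint on the member sets with maximality, which is tested against \emph{all} vertices of $C\setminus\Gamma$, including those of $N_{\scriptscriptstyle C}(\Gamma)$.

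The paper closes exactly this hole using the 2-layered property: every $v\in N_{\scriptscriptstyle C}(\Gamma)$ has a pendant neighbour $p$; since $v$ is adjacent to $\Gamma\subseteq T$ we have $v\notin T$, and maximality of $T$ in $C$ then forces $p\in T$, with $p\notin\Gamma$ (vertices of $[n]$ have neighbours in both $C$ and $H$, so they are not pendant). Hence every vertex of $N_{\scriptscriptstyle C}(\Gamma)$ is already dominated by $T\setminus\Gamma$, and only the case $v\notin N_{\scriptscriptstyle C}(\Gamma)$ remains, where your reasoning (and the paper's) goes through. Apart from this missing ingredient, the rest of your proposal --- stability and maximality of $S\cup\Gamma$, mutual inverseness, and carrying the vertex $b$ along to obtain Equation~\ref{eq2} --- coincides with the paper's proof.
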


\begin{proof}
Consider the mapping $f: \mathcal{B}_{\scriptscriptstyle H\setminus\Gamma}(\Theta, \overline{\Theta^{\mathsf{c}}\cup N_{\scriptscriptstyle H}(\Gamma)}) \to \mathcal{B}_{\scriptscriptstyle H}(\Theta \cup \Gamma, \overline{\Theta^{\mathsf{c}}\setminus \Gamma})$ defined by $ f(B) = B \cup \Gamma $. Let $B \in \mathcal{B}{\scriptscriptstyle H\setminus\Gamma}(\Theta, \overline{\Theta^{\mathsf{c}} \cup N{\scriptscriptstyle H}(\Gamma)})$, observe that \( f(B) \) includes all vertices from \( \Theta \cup \Gamma \) and excludes all vertices from \( \Theta^{\mathsf{c}} \setminus \Gamma \). Since \( B \) is a stable set in \( H \setminus \Gamma \) and contains no vertices from \( N_{\scriptscriptstyle H}(\Gamma) \), it follows that \( f(B) \) is stable in \( H \). Assume, for contradiction, that \( f(B) \) is not a maximal stable set in \( H \). Then, there exists a vertex \( v \in V(H) \) such that \( v \notin f(B) \) and \( B \cup \Gamma \cup \{v\} \) is stable in \( H \). This implies that \( B \cup \{v\} \) is stable in \( H \setminus \Gamma \), contradicting the maximality of \( B \) in \( H \setminus \Gamma \). Hence, \( f(B) \in \mathcal{B}_{\scriptscriptstyle H}(\Theta \cup \Gamma, \overline{\Theta^{\mathsf{c}} \setminus \Gamma}) \).

Next, consider a set \( D \in \mathcal{B}_{\scriptscriptstyle H}(\Theta \cup \Gamma, \overline{\Theta^{\mathsf{c}} \setminus \Gamma}) \). We define the inverse image of \( D \) under the mapping \( f \) as \( f^{-1}(D) = D \setminus \Gamma \). Since \( D \) necessarily includes all vertices from \( \Gamma \), it excludes any vertices from \( N_{\scriptscriptstyle H}(\Gamma) \). Therefore, \( f^{-1}(D) \) is clearly a subset of \( V(H \setminus \Gamma) \) that contains all vertices in \( \Theta \), while simultaneously avoiding all vertices in \( \Theta^{\mathsf{c}} \setminus \Gamma \) and \( N_{\scriptscriptstyle H}(\Gamma) \).

Also note that $D$ is a stable set, and since $f^{-1}(D)$ is a subset of $D$, it is also a stable set. Suppose, for contradiction, that $f^{-1}(D)$ is not maximal stable in $H \setminus \Gamma$. This implies there exists a vertex $v \in V(H \setminus \Gamma)$, with $v \notin f^{-1}(D)$, such that the set $\{v\} \cup f^{-1}(D)$ is stable in $H \setminus \Gamma$. Since all vertices in $\Gamma$ are 2-layered and $\Gamma$ contains no pendant vertices, each vertex in $N_{\scriptscriptstyle H}(\Gamma)$ has an adjacent pendant vertex in $H\setminus\Gamma$. Due to the maximality of the stable sets, these pendant vertices must be included in all member sets of $\mathcal{B}_{\scriptscriptstyle H}(\Theta \cup \Gamma, \overline{\Theta^{\mathsf{c}} \setminus \Gamma})$. Consequently, they are present in every member set of $f^{-1}(\mathcal{B}_{\scriptscriptstyle H}(\Theta \cup \Gamma, \overline{\Theta^{\mathsf{c}} \setminus \Gamma}))$. Thus, $v \notin N_{\scriptscriptstyle H}(\Gamma)$. Therefore, $\{v\} \cup D = \{v\} \cup f^{-1}(D) \cup \Gamma$ would be stable in $H$, which contradicts the assumption that $D$ is a maximal stable set in $H$. Hence, $f^{-1}(D) \in \mathcal{B}_{\scriptscriptstyle H \setminus \Gamma}(\Theta, \overline{\Theta^{\mathsf{c}} \cup N_{\scriptscriptstyle H}(\Gamma)})$ and the mapping is well-defined.

It is trivial that \( f \) is a bijective map. The one-to-one correspondence between the families \( \mathcal{B}_{\scriptscriptstyle H \setminus \Gamma}(\Theta, \overline{\Theta^{\mathsf{c}} \cup N_{\scriptscriptstyle H}(\Gamma)}) \) and \( \mathcal{B}_{\scriptscriptstyle H}(\Theta \cup \Gamma, \overline{\Theta^{\mathsf{c}} \setminus \Gamma}) \) establishes Equation \ref{eq1}. Now note that \( \mathcal{B}_{\scriptscriptstyle H \setminus \Gamma}(\{a\}\cup\Theta, \overline{\Theta^{\mathsf{c}} \cup N_{\scriptscriptstyle H}(\Gamma)}) \) and \( \mathcal{B}_{\scriptscriptstyle H}(\{a\}\cup\Theta \cup \Gamma , \overline{\Theta^{\mathsf{c}} \setminus \Gamma}) \) are precisely the subfamilies of \( \mathcal{B}_{\scriptscriptstyle H \setminus \Gamma}(\Theta, \overline{\Theta^{\mathsf{c}} \cup N_{\scriptscriptstyle H}(\Gamma)}) \) and \( \mathcal{B}_{\scriptscriptstyle H}(\Theta \cup \Gamma, \overline{\Theta^{\mathsf{c}} \setminus \Gamma}) \) respectively, obtained by restricting to those member sets that contain the vertex $a$. Observe that the bijection defined by \( f(B) = B \cup \Gamma \) does not add or remove any \( a \in V(H)\setminus\Gamma \); that is, it does not affect membership of \( a \). Hence, \( f \) restricts to a bijection from \( \mathcal{B}_{\scriptscriptstyle H \setminus \Gamma}(\{a\}\cup\Theta, \overline{\Theta^{\mathsf{c}} \cup N_{\scriptscriptstyle H}(\Gamma)}) \) to \( \mathcal{B}_{\scriptscriptstyle H}(\{a\}\cup\Theta \cup \Gamma , \overline{\Theta^{\mathsf{c}} \setminus \Gamma}) \). This justifies Equation \ref{eq2}.
\end{proof}

The Figure \ref{fig:p1in} gives an illustration of Lemma \ref{lem4}, showing the partition of \([n]\) into \(\Theta\) and \(\Theta^{\mathsf{c}}\), with the subset \(\Gamma \subseteq \Theta^{\mathsf{c}}\). Only the 2-layered vertices (in blue) are drawn, while the remaining vertices and edges are omitted. The graph \(H\) is shown in green, and the partition \(H \setminus \Gamma\) is indicated in purple.

\begin{lemma}\label{lem6}
     Let $H$ be a graph, and suppose $[n] \subseteq V(H)$ contains no isolated vertices. For any $\Theta\subseteq[n]$, $\Gamma_{\scriptscriptstyle 1},\Gamma_{\scriptscriptstyle 2}\subseteq\Theta^{\mathsf{c}}$ and $a\in V(H)$, if $\Gamma_{\scriptscriptstyle 1}\neq\Gamma_{\scriptscriptstyle 2}$, then,
    \begin{equation}\label{eq3}
       \mathcal{B}_{\scriptscriptstyle H\setminus \Gamma_{\scriptscriptstyle 1}}(\Theta,\overline{\Theta^{\mathsf{c}}\cup N_{\scriptscriptstyle H}( \Gamma_{\scriptscriptstyle 1}}))  
     \cap    \mathcal{B}_{\scriptscriptstyle H\setminus \Gamma_{\scriptscriptstyle 2}}(\Theta,\overline{\Theta^{\mathsf{c}}\cup N_{\scriptscriptstyle H}( \Gamma_{\scriptscriptstyle 2}}))=\emptyset
    \end{equation}
    \begin{equation}\label{eq4}
        \mathcal{B}_{\scriptscriptstyle H\setminus \Gamma_{\scriptscriptstyle 1}}(\{a\}\cup\Theta,\overline{\Theta^{\mathsf{c}}\cup N_{\scriptscriptstyle H}( \Gamma_{\scriptscriptstyle 1}}))  
     \cap    \mathcal{B}_{\scriptscriptstyle H\setminus \Gamma_{\scriptscriptstyle 2}}(\{a\}\cup\Theta,\overline{\Theta^{\mathsf{c}}\cup N_{\scriptscriptstyle H}( \Gamma_{\scriptscriptstyle 2}}))=\emptyset
    \end{equation}
\end{lemma}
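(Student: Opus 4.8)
The plan is a direct proof by contradiction. Suppose, for some $\Theta \subseteq [n]$ and distinct $\Gamma_1, \Gamma_2 \subseteq \Theta^{\mathsf{c}}$, a vertex set $B$ belongs to the intersection on the left of Equation \ref{eq3}. Since $\Gamma_1 \neq \Gamma_2$, their symmetric difference is nonempty, so — up to swapping the two indices — I may fix a vertex $v \in \Gamma_1 \setminus \Gamma_2$. Note $v \in [n] \subseteq V(C)$ and $v \notin \Gamma_2$, so $v$ is a genuine vertex of $C \setminus \Gamma_2$; moreover $v \in \Gamma_1 \subseteq \Theta^{\mathsf{c}}$, and $B$, lying in $\mathcal{B}_{C \setminus \Gamma_1}(\Theta, \overline{\Theta^{\mathsf{c}} \cup N_C(\Gamma_1)})$, excludes every vertex of $\Theta^{\mathsf{c}}$, hence $v \notin B$.

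The crux is to play the two memberships of $B$ against one another. From $B \in \mathcal{B}_{C \setminus \Gamma_2}(\Theta, \overline{\Theta^{\mathsf{c}} \cup N_C(\Gamma_2)})$: $B$ is a maximal stable set of $C \setminus \Gamma_2$ and $v$ is a vertex of $C \setminus \Gamma_2$ outside $B$, so $B \cup \{v\}$ fails to be stable, which forces $v$ to have a neighbour $u \in B$ in $C \setminus \Gamma_2$. Since deleting vertices only deletes edges, $u \in N_{C \setminus \Gamma_2}(v) \subseteq N_C(v) \subseteq N_C(\Gamma_1)$, the last inclusion because $v \in \Gamma_1$. From $B \in \mathcal{B}_{C \setminus \Gamma_1}(\Theta, \overline{\Theta^{\mathsf{c}} \cup N_C(\Gamma_1)})$: $B$ excludes every vertex of $N_C(\Gamma_1)$, so $u \notin B$ — contradicting $u \in B$. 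This proves Equation \ref{eq3}. Equation \ref{eq4} is then immediate: imposing the extra condition $b \in B$ only passes to subfamilies, $\mathcal{B}_{C \setminus \Gamma_i}(\{b\} \cup \Theta, \overline{\Theta^{\mathsf{c}} \cup N_C(\Gamma_i)}) \subseteq \mathcal{B}_{C \setminus \Gamma_i}(\Theta, \overline{\Theta^{\mathsf{c}} \cup N_C(\Gamma_i)})$ for $i = 1, 2$, so the intersection in Equation \ref{eq4} is contained in the (empty) intersection in Equation \ref{eq3}.

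I do not expect a serious obstacle here; the statement is really a disjointness bookkeeping fact about vertex sets viewed as maximal stable sets of two different vertex-deleted subgraphs. The one place to be careful is to keep the indices straight — maximality is exploited inside $C \setminus \Gamma_2$ (the subgraph that still contains the witness vertex $v$), whereas the exclusion of $N_C(\Gamma_1)$ is read off from the $\Gamma_1$-labelled family; conflating the two is the natural slip, and the symmetric case $v \in \Gamma_2 \setminus \Gamma_1$ is handled identically by interchanging the roles of $\Gamma_1$ and $\Gamma_2$. It is also worth remarking, in contrast to Lemma \ref{lem4}, that the 2-layered hypothesis plays no role in the argument above, so this lemma in fact holds without it, though retaining the hypothesis costs nothing.
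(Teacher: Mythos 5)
Your proof is correct and is essentially the paper's own argument: pick a witness vertex in $\Gamma_1\setminus\Gamma_2$, use maximality of $B$ in $C\setminus\Gamma_2$ to force a neighbour of that vertex inside $B$, contradict the exclusion of $N_C(\Gamma_1)$, and deduce Equation \ref{eq4} from the subfamily inclusion. Your side remark that the 2-layered hypothesis is not needed here is accurate but does not change the route.
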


\begin{proof}
Consider two distinct subsets \(\Gamma_{\scriptscriptstyle 1}\) and \(\Gamma_{\scriptscriptstyle 2}\) of \(\Theta^{\mathsf{c}}\). Without loss of generality, assume that there exists an element \(m \in \Gamma_{\scriptscriptstyle 1}\) such that \(m \notin \Gamma_{\scriptscriptstyle 2}\). Suppose \(B\) is an element of both \(\mathcal{B}_{\scriptscriptstyle H \setminus \Gamma_{\scriptscriptstyle 1}}(\Theta, \overline{\Theta^{\mathsf{c}} \cup N_{\scriptscriptstyle H}(\Gamma_{\scriptscriptstyle 1})})\) and \(\mathcal{B}_{\scriptscriptstyle H \setminus \Gamma_{\scriptscriptstyle 2}}(\Theta, \overline{\Theta^{\mathsf{c}} \cup N_{\scriptscriptstyle H}(\Gamma_{\scriptscriptstyle 2})})\). Since \(m \in \Gamma_{\scriptscriptstyle 1}\), the maximal stable sets in \(\mathcal{B}_{\scriptscriptstyle H \setminus \Gamma_{\scriptscriptstyle 1}}(\Theta, \overline{\Theta^{\mathsf{c}} \cup N_{\scriptscriptstyle H}(\Gamma_{\scriptscriptstyle 1})})\) avoid all vertices in \(N_{\scriptscriptstyle H}(m)\). Therefore, \(B \cap N_{\scriptscriptstyle H}(m) = \emptyset\). On the other hand, since \(B\) is also an element of \(\mathcal{B}_{\scriptscriptstyle H \setminus \Gamma_{\scriptscriptstyle 2}}(\Theta, \overline{\Theta^{\mathsf{c}} \cup N_{\scriptscriptstyle H}(\Gamma_{\scriptscriptstyle 2})})\), it must be a maximal stable set in \(H \setminus \Gamma_{\scriptscriptstyle 2}\) that excludes vertices from \(\Theta^{\mathsf{c}}\). Given that \(m \in \Theta^{\mathsf{c}}\) and \(m\) is not an isolated vertex, the maximality of \(B\) implies that \(B \cap N_{\scriptscriptstyle H}(m) \neq \emptyset\).

This contradiction shows that \(\mathcal{B}_{\scriptscriptstyle H \setminus \Gamma_{\scriptscriptstyle 1}}(\Theta, \overline{\Theta^{\mathsf{c}} \cup N_{\scriptscriptstyle H}(\Gamma_{\scriptscriptstyle 1})}) \cap \mathcal{B}_{\scriptscriptstyle H \setminus \Gamma_{\scriptscriptstyle 2}}(\Theta, \overline{\Theta^{\mathsf{c}} \cup N_{\scriptscriptstyle H}(\Gamma_{\scriptscriptstyle 2})})\) must be empty. Furthermore, it follows that \(\mathcal{B}_{\scriptscriptstyle H \setminus \Gamma}(\{a\} \cup \Theta, \overline{\Theta^{\mathsf{c}} \cup N_{\scriptscriptstyle H}(\Gamma)}) \subseteq \mathcal{B}_{\scriptscriptstyle H \setminus \Gamma}(\Theta, \overline{\Theta^{\mathsf{c}} \cup N_{\scriptscriptstyle H}(\Gamma)})\). Consequently, we deduce that \(\mathcal{B}_{\scriptscriptstyle H \setminus \Gamma_{\scriptscriptstyle 1}}(\{a\} \cup \Theta, \overline{\Theta^{\mathsf{c}} \cup N_{\scriptscriptstyle H}(\Gamma_{\scriptscriptstyle 1})}) \cap \mathcal{B}_{\scriptscriptstyle H \setminus \Gamma_{\scriptscriptstyle 2}}(\{a\} \cup \Theta, \overline{\Theta^{\mathsf{c}} \cup N_{\scriptscriptstyle H}(\Gamma_{\scriptscriptstyle 2})}) = \emptyset\).
\end{proof}

\begin{lemma}\label{lem7}
 Let $G$ be a graph with a decomposition $\{H,I\}$ such that $V(H)\cap V(I) = [n]$, where $[n]$ is a stable set and no vertex of $[n]$ is isolated in either $H$ or $I$. For $a \in V(I )\setminus [n]$ and $b \in [n]$, we have,
 \begin{equation}\label{eq5}
      w_{\scriptscriptstyle G}=\sum_{\scriptscriptstyle\Theta\subseteq[n]}\sum_{\scriptscriptstyle\Gamma\subseteq\Theta^{\mathsf{c}}}
       \left\{ w_{\scriptscriptstyle I\setminus\Gamma}(\Theta,\overline{\Theta^{\mathsf{c}}\cup N_{\scriptscriptstyle I}(\Gamma)})
\sum_{\scriptscriptstyle\Psi\subseteq\Theta^{\mathsf{c}}\setminus\Gamma}
 w_{\scriptscriptstyle H\setminus\Psi}(\Theta,\overline{\Theta^{\mathsf{c}}\cup N_{\scriptscriptstyle H}(\Psi)})\right\}
    \end{equation}
\begin{equation}\label{eq6}
w_{\scriptscriptstyle G}(a)=\sum_{\scriptscriptstyle\Theta\subseteq[n]}\sum_{\scriptscriptstyle\Gamma\subseteq\Theta^{\mathsf{c}}}
       \left\{ w_{\scriptscriptstyle I\setminus\Gamma}(\{a\}\cup\Theta,\overline{\Theta^{\mathsf{c}}\cup N_{\scriptscriptstyle I}(\Gamma)})
\sum_{\scriptscriptstyle\Psi\subseteq\Theta^{\mathsf{c}}\setminus\Gamma}
 w_{\scriptscriptstyle H\setminus\Psi}(\Theta,\overline{\Theta^{\mathsf{c}}\cup N_{\scriptscriptstyle H}(\Psi)})\right\}    \end{equation}
\begin{equation}\label{eq7}
w_{\scriptscriptstyle G}(b)=\sum_{\scriptscriptstyle\Theta\subseteq[n]}\sum_{\scriptscriptstyle\Gamma\subseteq\Theta^{\mathsf{c}}}
       \left\{ w_{\scriptscriptstyle I\setminus\Gamma}(\{b\}\cup\Theta,\overline{\Theta^{\mathsf{c}}\cup N_{\scriptscriptstyle I}(\Gamma)})
\sum_{\scriptscriptstyle\Psi\subseteq\Theta^{\mathsf{c}}\setminus\Gamma}
 w_{\scriptscriptstyle H\setminus\Psi}(\{b\}\cup\Theta,\overline{\Theta^{\mathsf{c}}\cup N_{\scriptscriptstyle H}(\Psi)})\right\}
    \end{equation}
   
\end{lemma}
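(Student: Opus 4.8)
The plan is to establish all three identities of \cref{lem7} by partitioning the set of maximal stable sets of $G$ according to how they meet the shared vertex set $[n]$, and then reconstructing each piece from data living purely inside $C$ and purely inside $H$. Fix a maximal stable set $B$ of $G$. Let $\Theta = B \cap [n]$ be the collection of shared vertices it contains. The key structural observation is that, since $C$ and $H$ are edge-disjoint and overlap only in $[n]$, a subset $S \subseteq V(G)$ is stable in $G$ if and only if $S \cap V(C)$ is stable in $C$ and $S \cap V(H)$ is stable in $H$; and the maximality of $B$ in $G$ forces the two restrictions $B_C := B \cap V(C)$ and $B_H := B \cap V(H)$ to be as large as possible subject to agreeing on $[n]$. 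However, $B_C$ need not itself be \emph{maximal} in $C$: a vertex $v \in V(C) \setminus [n]$ could be addable to $B_C$ inside $C$ yet be blocked in $G$ by a neighbour lying in $V(H) \setminus [n]$ --- but such a neighbour of $v$ cannot exist, since $v \notin [n]$ means all of $v$'s edges lie in a single part of the decomposition. The only genuine obstruction is at the shared vertices: some $m \in \Theta^{\mathsf c}$ may fail to be addable to $B$ because it is blocked by a vertex of $B_C$, or by a vertex of $B_H$, or both. I would record $\Gamma := \{\, m \in \Theta^{\mathsf c} : B_C \cap N_{\scriptscriptstyle C}(m) = \emptyset \,\}$, the shared non-members that $C$ alone does not block, and $\Psi := \{\, m \in \Theta^{\mathsf c}: B_H \cap N_{\scriptscriptstyle H}(m) = \emptyset\,\}$ analogously for $H$; maximality of $B$ in $G$ is then exactly the condition $\Gamma \cap \Psi = \emptyset$, i.e. $\Psi \subseteq \Theta^{\mathsf c} \setminus \Gamma$, which is precisely the index range appearing in the inner sum of \eqref{eq5}.

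The reconstruction step is the heart of the argument. Given a triple $(\Theta, \Gamma, \Psi)$ with $\Gamma \subseteq \Theta^{\mathsf c}$ and $\Psi \subseteq \Theta^{\mathsf c}\setminus\Gamma$, I claim the maximal stable sets $B$ of $G$ producing exactly this triple are in bijection with pairs $(B_C, B_H)$ where $B_C$ is a maximal stable set of $C$ containing $\Theta \cup \Gamma$ and avoiding $\Theta^{\mathsf c}\setminus\Gamma$, and $B_H$ is a maximal stable set of $H$ containing $\Theta$ (but not $\Gamma$, since $\Gamma \cap \Psi = \emptyset$ is already built in via $\Psi$) and avoiding the appropriate shared vertices. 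The point is that $B_C$ and $B_H$ agree on $[n]$ by construction --- both meet $[n]$ in $\Theta$ for the counting functions as written, after translating via \cref{lem4}. Indeed \cref{lem4} converts $w_{\scriptscriptstyle C\setminus\Gamma}(\Theta, \overline{\Theta^{\mathsf c}\cup N_{\scriptscriptstyle C}(\Gamma)})$ into $w_{\scriptscriptstyle C}(\Theta\cup\Gamma, \overline{\Theta^{\mathsf c}\setminus\Gamma})$, which is the count of valid $B_C$'s, and similarly the term $w_{\scriptscriptstyle H\setminus\Psi}(\Theta, \overline{\Theta^{\mathsf c}\cup N_{\scriptscriptstyle H}(\Psi)})$ counts valid $B_H$'s with $\Psi$ being the set of shared non-members unblocked in $H$. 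Taking the product counts the pairs, and summing over all $\Gamma$ and all $\Psi \subseteq \Theta^{\mathsf c}\setminus\Gamma$, then over all $\Theta$, counts every maximal stable set of $G$ exactly once --- \emph{exactly} once because \cref{lem6} guarantees that distinct $\Gamma$'s (and distinct $\Psi$'s) give disjoint families, so there is no overcounting. That yields \eqref{eq5}.

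For \eqref{eq6} and \eqref{eq7} I would simply rerun the same bijection while insisting that the distinguished vertex lie in $B$. For $b \in V(C\setminus[n])$, membership $b \in B$ is equivalent to $b \in B_C$ (as $b$ lives only in $C$), so every occurrence of $w_{\scriptscriptstyle C\setminus\Gamma}(\Theta, \cdots)$ gets replaced by $w_{\scriptscriptstyle C\setminus\Gamma}(\{b\}\cup\Theta, \cdots)$ and nothing on the $H$ side changes --- this uses \eqref{eq2} of \cref{lem4} and \eqref{eq4} of \cref{lem6} in place of the unstarred versions. For $a \in [n]$, membership $a \in B$ means $a \in \Theta$, so the bijection is unaffected in range but we must count only pairs with $a \in B_C$ \emph{and} $a \in B_H$ (automatic once $a \in \Theta$, but we write it into both factors to make the bookkeeping uniform), giving \eqref{eq7}; strictly, when $a \notin \Theta$ both factors vanish, consistent with $a \notin B$.

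The main obstacle I anticipate is the well-definedness and surjectivity of the reconstruction map --- specifically, verifying that an arbitrary pair $(B_C, B_H)$ from the index set glues to a set $B = B_C \cup B_H$ that is genuinely \emph{maximal} in $G$, not merely stable. Stability is immediate from edge-disjointness. Maximality requires ruling out an addable vertex $v \notin B$: if $v \in V(C)\setminus[n]$ then addability in $G$ equals addability in $C$, excluded by maximality of $B_C$; symmetrically for $v \in V(H)\setminus[n]$; and if $v \in [n] = \Theta^{\mathsf c}\setminus(\text{stuff})$, here is exactly where the \textbf{2-layered} hypothesis does its work --- this is the same mechanism as in the proof of \cref{lem4}: each neighbour of $v$ in $C$ carries a pendant vertex forced into every relevant maximal stable set, pinning down whether $v$ is blocked, so that $v$ addable to $B$ would force $v$ addable to $B_C$ or to $B_H$, contradicting one of the two maximalities (the case split being governed by whether $v \in \Gamma$ or $v \in \Psi$, which cannot both hold). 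I would write this gluing lemma out carefully, as it is the one place where a naive edge-disjoint decomposition would fail and the pendant-vertex condition is indispensable.
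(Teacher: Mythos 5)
Your overall route is the same as the paper's: classify each maximal stable set $B$ of $G$ by its trace $\Theta=B\cap[n]$ and by the sets $\Gamma,\Psi$ of shared non-members left unblocked in $C$ and in $H$, note that maximality forces $\Gamma\cap\Psi=\emptyset$, set up a bijection between $\mathcal{B}_{\scriptscriptstyle G}$ and pairs of constrained maximal stable sets of the two parts, use \cref{lem6} to see that distinct triples give disjoint families (so nothing is overcounted), and obtain \eqref{eq6} and \eqref{eq7} by restricting the same correspondence to sets containing $b$, respectively $a$. This is exactly the paper's proof in outline.

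Two points in your plan need correcting, though neither destroys the argument. First, the gluing must be done with the pairs the lemma actually counts, namely $B_{1}\in\mathcal{B}_{\scriptscriptstyle C\setminus\Gamma}(\Theta,\overline{\Theta^{\mathsf{c}}\cup N_{\scriptscriptstyle C}(\Gamma)})$ and $B_{2}\in\mathcal{B}_{\scriptscriptstyle H\setminus\Psi}(\Theta,\overline{\Theta^{\mathsf{c}}\cup N_{\scriptscriptstyle H}(\Psi)})$, glued as $B_{1}\cup B_{2}$. If you instead pass through \cref{lem4} and take $B_C$ maximal in $C$ containing $\Theta\cup\Gamma$, then $B_C\cup B_H$ contains $\Gamma\cup\Psi$, has the wrong trace on $[n]$, and in general is not even stable (a vertex of $\Gamma$ typically has an $H$-neighbour inside $B_H$); the \cref{lem4} translation belongs to the proof of \cref{thm42}, not to \cref{lem7} itself. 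Second, the 2-layered hypothesis is not where you claim it is indispensable: with the untranslated pairs, maximality of $B_{1}\cup B_{2}$ at a shared vertex $v\in\Theta^{\mathsf{c}}$ follows directly from the case split you describe — if $v\notin\Gamma$ then $v\in V(C\setminus\Gamma)$ and maximality of $B_{1}$ supplies a blocking neighbour, while if $v\in\Gamma$ then $v\notin\Psi$ and maximality of $B_{2}$ in $H\setminus\Psi$ does — with no pendant vertices needed; indeed the paper's proof of \cref{lem7} never invokes 2-layeredness. The pendant condition is genuinely needed only in \cref{lem4}, where it forces the vertices of $N_{\scriptscriptstyle C}(\Gamma)$ to stay blocked after $\Gamma$ is deleted (equivalently, it is what would rescue your translated-pair gluing at the vertices of $N_{\scriptscriptstyle C}(\Gamma)$, not at $v\in[n]$). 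With these two adjustments your argument coincides with the paper's.
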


\begin{proof}
    We will prove the equation \ref{eq5} by showing that,
    \[\mathcal{B}_{\scriptscriptstyle G} =\left\{B_{1}\cup B_{2}  \ \middle\vert \begin{array}{l}
     B_{1}\in \mathcal{B}_{\scriptscriptstyle I\setminus\Gamma}(\Theta,\overline{\Theta^{\mathsf{c}}\cup N_{\scriptscriptstyle I}(\Gamma)}),  B_{2}\in \mathcal{B}_{\scriptscriptstyle H\setminus\Psi}(\Theta,\overline{\Theta^{\mathsf{c}}\cup N_{\scriptscriptstyle H}(\Psi)}) \\
    \Theta\subseteq [n],\hspace{.5cm}\Gamma\subseteq\Theta^{\mathsf{c}},\hspace{.5cm}\Psi\subseteq\Theta^{\mathsf{c}}\setminus\Gamma
  \end{array}\right\}.
    \]

For simplicity, let us denote the family on the right-hand side by \(\mathcal{D}\). Consider any member set \(B_{1} \cup B_{2}\) in \(\mathcal{D}\). It is straightforward to observe that \(B_{1} \cup B_{2}\) forms a stable set in \(G\). If this set is not maximal in \(G\), then there must exist a vertex \(v \in V(G)\) such that \(\{v\} \cup B_{1} \cup B_{2}\) is also a stable set. However, due to the maximality of \(B_{1}\) and \(B_{2}\) in the subgraphs \(I \setminus \Gamma\) and \(H \setminus \Psi\), respectively, the vertex \(v\) cannot belong to either of these subgraphs. Since \(I \setminus \Gamma \cup H \setminus \Psi = G\), there is no such vertex \(v\) in the graph \(G\). Therefore, \(B_{1} \cup B_{2}\) must be a maximal stable set in \(G\), implying that \(B_{1} \cup B_{2} \in \mathcal{B}_{\scriptscriptstyle G}\) and hence \(\mathcal{D} \subseteq \mathcal{B}_{\scriptscriptstyle G}\).

Now, consider any \( B \in \mathcal{B}_{\scriptscriptstyle G} \). Define the subsets \(\Theta \subseteq [n]\) and \(\Theta^{\mathsf{c}} = [n] \setminus \Theta\) by setting \(\Theta = B \cap [n]\). Next, define \(\Gamma, \Psi \subseteq \Theta^{\mathsf{c}}\) as the sets of vertices in \([n]\) such that \(N_{\scriptscriptstyle I}(\Gamma) \cap B = \emptyset\) and \(N_{\scriptscriptstyle H}(\Psi) \cap B = \emptyset\), respectively. It is evident that \(\Gamma \cap \Psi = \emptyset\), because if there were a vertex \( v \in \Theta^{\mathsf{c}} \) with \( B \cap N_{\scriptscriptstyle G}(v) = B\cap (N_{\scriptscriptstyle I}(v) \cup N_{\scriptscriptstyle H}(v)) =  (B\cap N_{\scriptscriptstyle I}(v)) \cup (B \cap N_{\scriptscriptstyle H}(v))  = \emptyset \), this would contradict the maximality of \( B \) as a stable set\footnote{Since \( v \) and none of its neighbors would be included in \( B \)}. Furthermore, \( B \cap (I \setminus \Gamma) \) contains all vertices in \(\Theta\) but excludes vertices from both \(\Theta^{\mathsf{c}}\) and \(N_{\scriptscriptstyle I}(\Gamma)\). Notably, this set is stable since it is a subset of \( B \).

Assume that \( B \) is not maximal in \( I \setminus \Gamma \), and there exists a vertex \( v \in V(I \setminus \Gamma) \) such that \( v \notin B \) and \( (\{v\} \cup B) \cap (I \setminus \Gamma) \) is a stable set. For each \( u \in \Theta^{\mathsf{c}} \setminus \Gamma \), by definition, \( N_{\scriptscriptstyle I}(u) \cap B \neq \emptyset \), which implies that \( v \) cannot belong to \( \Theta^{\mathsf{c}} \setminus \Gamma \). Consequently, \( v \notin [n] \). Note that \( v \) also does not belong to \( N_{\scriptscriptstyle I}(\Theta) \), as \( (\{v\} \cup B) \cap (I \setminus \Gamma) \) remains stable. From this, one can observe that \( (\{v\} \cup B) \cap (H \setminus \Psi) \) is also stable, and consequently, \( B \cup \{v\} = ((\{v\} \cup B) \cap (I \setminus \Gamma)) \cup ((\{v\} \cup B) \cap (H \setminus \Psi)) \) forms a stable set in \( G \), which contradicts the maximality of \( B \). Therefore, \( B \cap (I \setminus \Gamma) \in \mathcal{B}_{\scriptscriptstyle I\setminus \Gamma}(\Theta, \overline{\Theta^{\mathsf{c}} \cup N_{\scriptscriptstyle I}(\Gamma)}) \). Similarly, one can show that \( B \cap (H \setminus \Psi) \in \mathcal{B}_{\scriptscriptstyle H\setminus\Psi}(\Theta, \overline{\Theta^{\mathsf{c}} \cup N_{\scriptscriptstyle H}(\Psi)}) \), and hence \( B \in \mathcal{D} \). This implies that \( \mathcal{B}_{\scriptscriptstyle G} \subseteq \mathcal{D} \).

With the application of Lemma \ref{lem6}, one can see that any \( B \in  \mathcal{D} \) has a unique \( B_{1} \) and \( B_{2} \) associated where \( B_{1} \cup B_{2} = B \), with \( B_{1}\in \mathcal{B}_{\scriptscriptstyle I\setminus\Gamma}(\Theta,\overline{\Theta^{\mathsf{c}}\cup N_{\scriptscriptstyle I}(\Gamma)}) \) and \( B_{2}\in \mathcal{B}_{\scriptscriptstyle H\setminus\Psi}
(\Theta,\overline{\Theta^{\mathsf{c}}\cup N_{\scriptscriptstyle H}(\Psi)}) \). This, together with \( \mathcal{B}_{\scriptscriptstyle G} =  \mathcal{D} \), derives Equation \ref{eq5}. Now notice that for any \( a \in V(I )\setminus [n] \), the family \( \mathcal{B}_{\scriptscriptstyle G}(a) \) consists of the maximal stable sets of \( G \) containing \( a \). Due to the one-to-one correspondence with \( \mathcal{D} \), this must be the collection of stable sets in \( \mathcal{D} \) that contain \( a \). That is,
\[
\mathcal{B}_{\scriptscriptstyle G}(a)=\left\{B_{1}\cup B_{2}  \ \middle\vert \begin{array}{l}
 B_{1}\in \mathcal{B}_{\scriptscriptstyle I\setminus\Gamma}(\{a\}\cup\Theta,\overline{\Theta^{\mathsf{c}}\cup N_{\scriptscriptstyle I}(\Gamma)}),  B_{2}\in \mathcal{B}_{\scriptscriptstyle H\setminus\Psi}(\Theta,\overline{\Theta^{\mathsf{c}}\cup N_{\scriptscriptstyle H}(\Psi)}) \\
 \Theta\subseteq [n],\hspace{.5cm}\Gamma\subseteq\Theta^{\mathsf{c}},\hspace{.5cm}\Psi\subseteq\Theta^{\mathsf{c}}\setminus\Gamma
\end{array}\right\}.
\]
Thus, Equation \ref{eq6} is established. Now, for Equation \ref{eq7}, consider any \( b \in [n] \). One can see that \( \mathcal{B}_{\scriptscriptstyle G}(b) \) is nothing but the collection of member sets of \( \mathcal{D} \) which contain \( b \). As \( b \in V(H) \cap V(I) \), it directly follows that
\[
\mathcal{B}_{\scriptscriptstyle G}(b)=\left\{B_{1}\cup B_{2}  \ \middle\vert \begin{array}{l}
 B_{1}\in \mathcal{B}_{\scriptscriptstyle I\setminus\Gamma}(\{b\}\cup\Theta,\overline{\Theta^{\mathsf{c}}\cup N_{\scriptscriptstyle I}(\Gamma)}),  B_{2}\in \mathcal{B}_{\scriptscriptstyle H\setminus\Psi}(\{b\}\cup\Theta,\overline{\Theta^{\mathsf{c}}\cup N_{\scriptscriptstyle H}(\Psi)}) \\
 \Theta\subseteq [n],\hspace{.5cm}\Gamma\subseteq\Theta^{\mathsf{c}},\hspace{.5cm}\Psi\subseteq\Theta^{\mathsf{c}}\setminus\Gamma
\end{array}\right\}.
\]
\end{proof}

An illustration of Lemma~\ref{lem7} is provided in Figure~\ref{fig:p2}. In this figure, the graph \( G \) is decomposed into two subgraphs \( H \) and \( I \), and the set of common vertices \([n] = [4]\) is partitioned into \(\Theta = \{1\}\) and \(\Theta^{\mathsf{c}} = \{2,3,4\}\). Furthermore, \(\Theta^{\mathsf{c}}\) is divided into the disjoint subsets \(\Gamma = \{2\}\) and \(\Psi = \{4\}\), as defined in the lemma.
Consider the maximal stable set \(\{1,6,9,11,13,14,17,18\} \in \mathcal{B}_{\scriptscriptstyle G}(\Theta,\overline{\Theta^{\mathsf{c}}})\). Observe that this set can be expressed as the union of \(\{1,6,9,17,18\}\) and \(\{1,11,13,14\}\). These two sets belong to the families \(\mathcal{B}_{\scriptscriptstyle H\setminus\Psi}(\Theta,\overline{\Theta^{\mathsf{c}}\cup N_{\scriptscriptstyle H}(\Psi)})\) and \(\mathcal{B}_{\scriptscriptstyle I\setminus\Gamma}(\Theta,\overline{\Theta^{\mathsf{c}}\cup N_{\scriptscriptstyle I}(\Gamma)})\), respectively. This demonstrates the bijection described in the proof of Lemma~\ref{lem7}.

\begin{proof}[Proof of Theorem \ref{thm4}]
 Let $m \in  V(H) \cap V(I)$, and suppose $N_{\scriptscriptstyle I}(m) = \emptyset$. In this case, we can simply remove the vertex from $I$ to get a new decomposition. Similarly, if $N_{\scriptscriptstyle H}(m) = \emptyset$, we can remove $m$ from $H$. This allows us to redefine $H$ and $I$ in such a way that every vertex in $ V(H) \cap V(I)$ has neighbors in both $H$ and $I$. Let $[n]$ be the vertices common to these subgraphs $H$ and $I$. We will determine the number of maximal stable sets using Lemma \ref{lem7}. First, let's apply Lemma \ref{lem4} to Equation \ref{eq5}.
\begin{align}
    w_{\scriptscriptstyle G}
     & = \sum_{\scriptscriptstyle\Theta\subseteq[n]}\sum_{\scriptscriptstyle\Gamma\subseteq\Theta^{\mathsf{c}}}
       \left\{ w_{\scriptscriptstyle I\setminus\Gamma}(\Theta,\overline{\Theta^{\mathsf{c}}\cup N_{\scriptscriptstyle I}(\Gamma)})
\sum_{\scriptscriptstyle\Psi\subseteq\Theta^{\mathsf{c}}\setminus\Gamma}
w_{\scriptscriptstyle H\setminus \Psi}(\Theta,\overline{\Theta^{\mathsf{c}}\cup N_{\scriptscriptstyle H}(\Psi)})\right\}   \label{eqq9}
 \\
       & =  \sum_{\scriptscriptstyle\Theta\subseteq[n]}\sum_{\scriptscriptstyle\Gamma\subseteq\Theta^{\mathsf{c}}}
       \left\{ w_{\scriptscriptstyle I}(\Theta\cup\Gamma,\overline{\Theta^{\mathsf{c}}\setminus\Gamma})
\sum_{\scriptscriptstyle\Psi\subseteq\Theta^{\mathsf{c}}\setminus\Gamma}
w_{\scriptscriptstyle H}(\Theta\cup\Psi,\overline{\Theta^{\mathsf{c}}\setminus\Psi})\right\} \label{eqq10}
\end{align} 
Notice that,
\begin{align}
\sum_{\scriptscriptstyle\Psi\subseteq\Theta^{\mathsf{c}}\setminus\Gamma}
w_{\scriptscriptstyle H}(\Theta\cup\Psi,\overline{\Theta^{\mathsf{c}}\setminus\Psi}) & = \sum_{\scriptscriptstyle\Psi\subseteq\Theta^{\mathsf{c}}\setminus\Gamma}
w_{\scriptscriptstyle H}(\Theta\cup\Psi,\overline{\Gamma\cup ((\Theta^{\mathsf{c}}\setminus\Gamma)\setminus\Psi)})   \label{eqq11}\\
        & = w_{\scriptscriptstyle H}(\Theta,\overline{\Gamma})   \label{eqq12}
\end{align}
 This identity holds because we are summing over all possible subsets \(\Psi \subseteq \Theta^{\mathsf{c}} \setminus \Gamma\). Next, we introduce a new variable \(\Omega = \Theta \cup \Gamma\), with \(\Omega^{\mathsf{c}} = [n] \setminus \Omega = \Theta^{\mathsf{c}} \setminus \Gamma\). This allows us to express $w_{\scriptscriptstyle I}(\Theta \cup \Gamma, \overline{\Theta^{\mathsf{c}} \setminus \Gamma})$ as $w_{\scriptscriptstyle I}(\Omega, \overline{\Omega^{\mathsf{c}}})$. We can now apply this change of variables after substituting \eqref{eqq12} into equation \eqref{eqq10}.
\begin{align}
    w_{\scriptscriptstyle G}
     & = \sum_{\scriptscriptstyle\Theta\subseteq[n]}\sum_{\scriptscriptstyle\Gamma\subseteq\Theta^{\mathsf{c}}}
       \left\{ w_{\scriptscriptstyle I}(\Theta\cup\Gamma,\overline{\Theta^{\mathsf{c}}\setminus\Gamma})
w_{\scriptscriptstyle H}(\Theta,\overline{\Gamma})\right\}   \tag{[Substituting equation \ref{eqq12}]}
 \\
       & =  \sum_{\scriptscriptstyle\Omega\subseteq[n]}\sum_{\scriptscriptstyle\Theta\subseteq\Omega}
       \left\{ w_{\scriptscriptstyle I}(\Omega,\overline{\Omega^{\mathsf{c}}})
w_{\scriptscriptstyle H}(\Theta,\overline{\Omega\setminus\Theta})\right\}  \tag{[Change of variables]}
\\
       & =  \sum_{\scriptscriptstyle\Omega\subseteq[n]}
       \left\{ w_{\scriptscriptstyle I}(\Omega,\overline{\Omega^{\mathsf{c}}})  \sum_{\scriptscriptstyle\Theta\subseteq\Omega}
w_{\scriptscriptstyle H}(\Theta,\overline{\Omega\setminus\Theta})\right\} \nonumber \\
       & =  \sum_{\scriptscriptstyle\Omega\subseteq[n]}
       \left\{ w_{\scriptscriptstyle I}(\Omega,\overline{\Omega^{\mathsf{c}}}) w_{\scriptscriptstyle H}\right\}  \nonumber
       \\
       & =  w_{\scriptscriptstyle I} w_{\scriptscriptstyle H} \label{eqq13}
\end{align}
 \end{proof}

\begin{lemma}\label{newl}
Let \(G\) be a graph with a decomposition \(\{H,I\}\) such that \(V(H)\cap V(I) = [n]\), where \([n]\) is a stable set. Assume that no vertex of \([n]\) is isolated in either \(H\) or \(I\), and that every vertex of \([n]\) is 2-layered. Let \(a \in V(I)\setminus [n]\) and \(b \in [n]\). Then,
\begin{equation}\label{eql9}
w_{\scriptscriptstyle G}(a) = w_{\scriptscriptstyle I}(a)w_{\scriptscriptstyle H}
\end{equation}
\begin{equation}\label{eql10}
w_{\scriptscriptstyle G}(b) \leq w_{\scriptscriptstyle I}(b)w_{\scriptscriptstyle H}(b)
\end{equation}
\end{lemma}

\begin{proof}
  Very similar to the proof of Theorem \ref{thm4}, applying Lemma \ref{lem4} to Equation \ref{eq6}, it can be deduced that 
\( w_{\scriptscriptstyle G}(a) = w_{\scriptscriptstyle I}(a)\, w_{\scriptscriptstyle H} \) 
for any \( a \in V(I)\setminus [n] \). In order to prove Equation \ref{eql10}, we apply Lemma \ref{lem4} to Equation \ref{eq7}. Now consider a vertex \(b\in [n]\). One should note that Lemma \ref{lem4} states 
\( w_{\scriptscriptstyle I \setminus \Gamma}(\{b\}\cup \Theta, \overline{\Theta^{\mathsf{c}}\cup N_{\scriptscriptstyle I}(\Gamma)}) 
= w_{\scriptscriptstyle I}(\{b\}\cup \Theta \cup \Gamma, \overline{\Theta^{\mathsf{c}} \setminus \Gamma}) \) 
whenever \( b \notin \Gamma \). Observe that for every \( \Gamma \) with \( b \in \Gamma \), we have 
\( w_{\scriptscriptstyle I \setminus \Gamma}(\{b\} \cup \Theta, \overline{\Theta^{\mathsf{c}} \cup N_{\scriptscriptstyle I}(\Gamma)}) = 0 \). Therefore, some terms vanish, and since the steps are otherwise identical to the proof of Theorem \ref{thm4}, we obtain 
\( w_{\scriptscriptstyle G}(b) \leq w_{\scriptscriptstyle I}(b)\, w_{\scriptscriptstyle H}(b) \).
\end{proof}

\begin{proof}[Proof of Theorem \ref{thm42}]
 We redefine \(H\) and \(I\) so that no common vertex is isolated in either subgraph, as in the proof of Theorem \ref{thm4}. Let \([n]\) denote the set of common vertices. Since all vertices of \([n]\) belong to the same bipartite class, \([n]\) forms a stable set. Moreover, as each vertex of \([n]\) is 2-layered, we may apply Theorem \ref{thm4} and Lemma \ref{newl}. Consider \( r \in V(I) \) a rare vertex in \( I \). If \( r \in V(I) \setminus [n] \), we have $w_{\scriptscriptstyle G}-2w_{\scriptscriptstyle G}(r)=w_{\scriptscriptstyle I}w_{\scriptscriptstyle H}-2w_{\scriptscriptstyle I}(r)w_{\scriptscriptstyle H}=(w_{\scriptscriptstyle I}-2w_{\scriptscriptstyle I}(r))w_{\scriptscriptstyle H}\geq 0$. Now, if \( r \in [n] \), then $w_{\scriptscriptstyle G}-2w_{\scriptscriptstyle G}(r)\geq w_{\scriptscriptstyle I}w_{\scriptscriptstyle H}-2w_{\scriptscriptstyle I}(r)w_{\scriptscriptstyle H}(r)\geq(w_{\scriptscriptstyle I}-2w_{\scriptscriptstyle I}(r))w_{\scriptscriptstyle H}\geq 0$. Therefore, every rare vertex in \( I \) is also rare in \( G \).
\end{proof}

\begin{figure}[h!]
\centering
\begin{minipage}{.475\textwidth}
  \centering

 \includegraphics[width=0.75\textwidth]{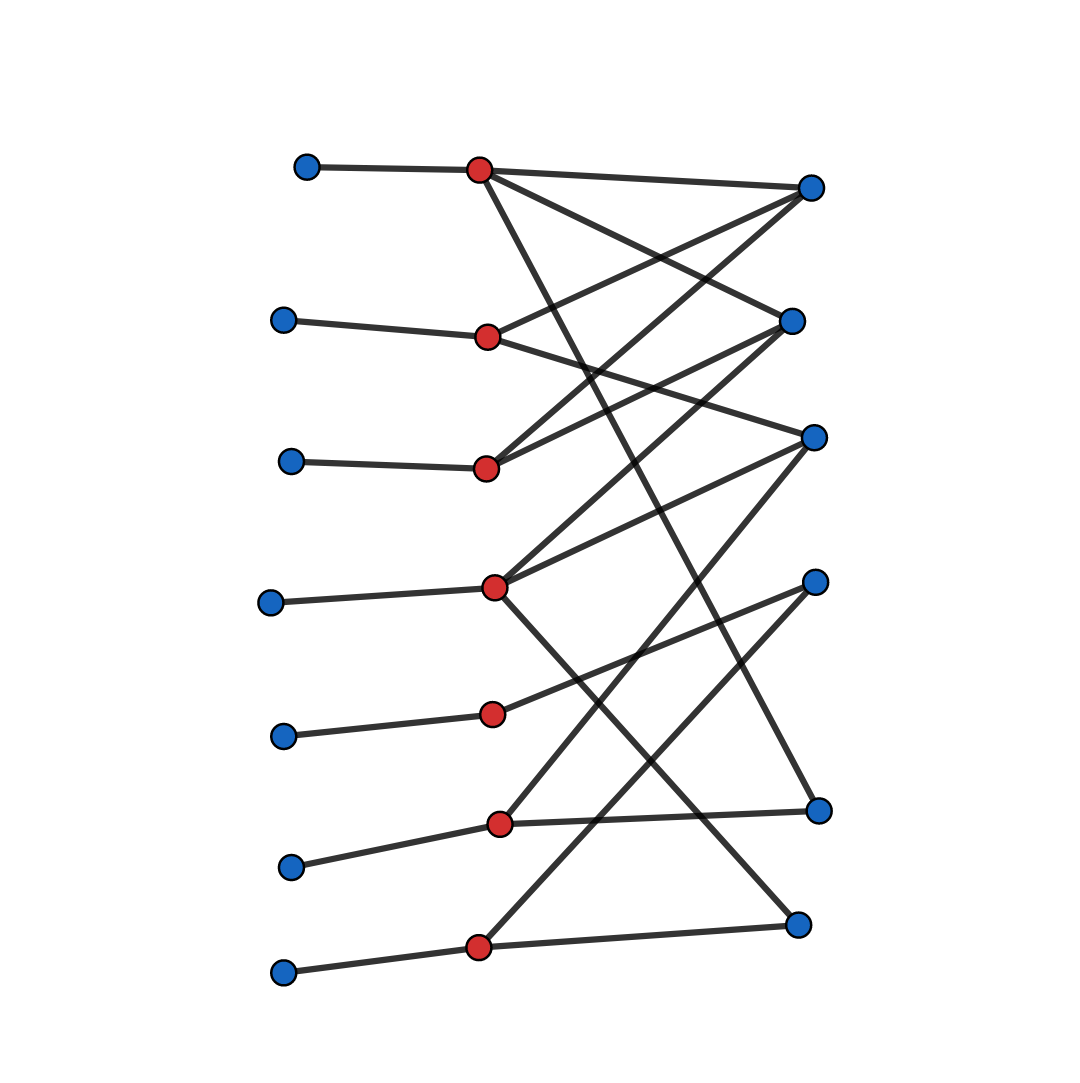}
 
 \caption{The bipartite graph shown consists of two bipartite classes represented by red and blue vertices. As all the vertices in red bipartite class has a pendant vertex adjacent to it, the graph satisfies the conjecture.}
  \label{fig:c}

\end{minipage}%
\hfill
\begin{minipage}{.475\textwidth}
  \centering
 
  \includegraphics[width=0.75\textwidth]{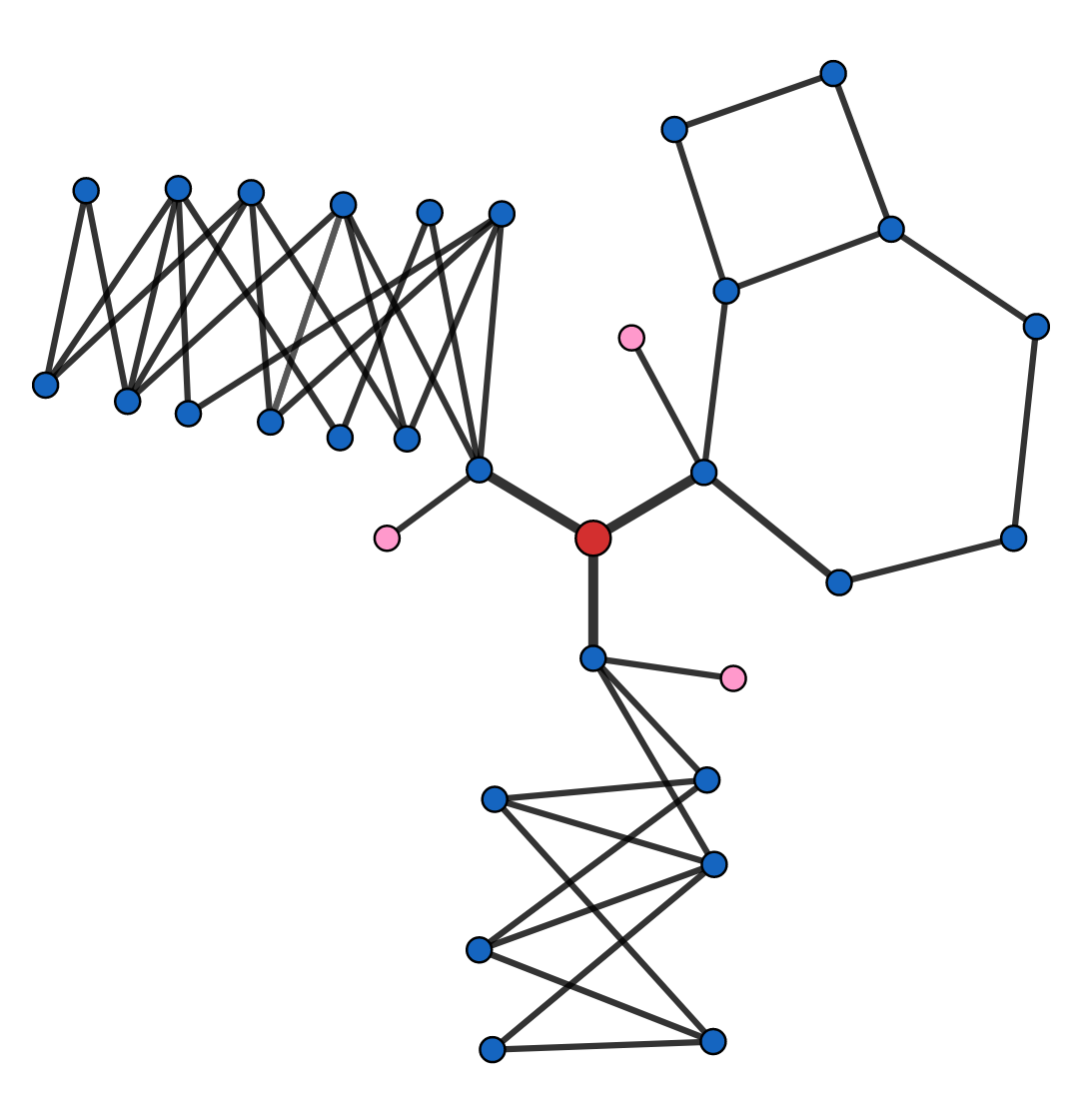}

\caption{Three merged graphs, with a 2-layered vertex highlighted in red. The resultant graph satisfies the conjecture because each of the three individual graphs satisfies the conjecture.}

  \label{fig:p}
 
\end{minipage}
\end{figure}

The theorem's implications extend to solving the conjecture for previously unexplored graph classes, opening up new avenues of research. Now, let's discuss some of the notable consequences stemming from this result. All results concerning the set version of the Union Closed Sets Conjecture presented below are directly derived from the corresponding graph results by applying Proposition \ref{prop31} within the respective graph results.

\begin{proposition}[Set Version of Theorem \ref{thm42}]
     Let \(\mathcal{F}_{\scriptscriptstyle 1}\) and \(\mathcal{F}_{\scriptscriptstyle 2}\) be two families of sets that do not share any common sets. Let \(A\) denote the set of elements that are common to both families, i.e., \(A = U(\mathcal{F}_{\scriptscriptstyle 1}) \cap U(\mathcal{F}_{\scriptscriptstyle 2})\). Assume that every \(S \in \mathcal{F}_{\scriptscriptstyle 1} \cup \mathcal{F}_{\scriptscriptstyle 2}\) with \(S \cap A \neq \emptyset\) contains an element of frequency 1. If \(\langle \mathcal{F}_{\scriptscriptstyle 1} \rangle\) or \(\langle \mathcal{F}_{\scriptscriptstyle 2} \rangle\) satisfies the UCC condition, then \(\langle \mathcal{F}_{\scriptscriptstyle 1} \cup \mathcal{F}_{\scriptscriptstyle 2} \rangle\) also satisfies the UCC condition.
\end{proposition}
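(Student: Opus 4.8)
The plan is to translate the set-theoretic hypotheses into the language of bipartite graphs via the incidence-graph construction, so that the statement becomes an instance of Theorem \ref{thm42}. First I would pass to the incidence graphs $G_{\scriptscriptstyle 1}$ and $G_{\scriptscriptstyle 2}$ of $\mathcal{F}_{\scriptscriptstyle 1}$ and $\mathcal{F}_{\scriptscriptstyle 2}$ respectively. In each $G_{\scriptscriptstyle i}$ the vertex classes are $U(\mathcal{F}_{\scriptscriptstyle i})$ on one side and the member sets of $\mathcal{F}_{\scriptscriptstyle i}$ on the other, with edges recording membership; since $\mathcal{F}_{\scriptscriptstyle 1}$ and $\mathcal{F}_{\scriptscriptstyle 2}$ share no common set, the only vertices $G_{\scriptscriptstyle 1}$ and $G_{\scriptscriptstyle 2}$ have in common are the elements of $A = U(\mathcal{F}_{\scriptscriptstyle 1}) \cap U(\mathcal{F}_{\scriptscriptstyle 2})$, all of which lie in the ``element'' class. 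Form $G$ as the union of $G_{\scriptscriptstyle 1}$ and $G_{\scriptscriptstyle 2}$; then $\{G_{\scriptscriptstyle 1}, G_{\scriptscriptstyle 2}\}$ is a decomposition of $G$ (the subgraphs are edge-disjoint because no set is shared, and together they cover every edge), and $V(G_{\scriptscriptstyle 1}) \cap V(G_{\scriptscriptstyle 2}) = A$, a subset of a single bipartite class of $G$. One should also check that $G$ is the incidence graph of $\mathcal{F}_{\scriptscriptstyle 1} \cup \mathcal{F}_{\scriptscriptstyle 2}$, so that by Proposition \ref{prop31} a rare vertex of $G$ in the element class corresponds to an abundant element of $\langle \mathcal{F}_{\scriptscriptstyle 1} \cup \mathcal{F}_{\scriptscriptstyle 2}\rangle$; here one must be mildly careful that the universe of $\mathcal{F}_{\scriptscriptstyle 1} \cup \mathcal{F}_{\scriptscriptstyle 2}$ is $U(\mathcal{F}_{\scriptscriptstyle 1}) \cup U(\mathcal{F}_{\scriptscriptstyle 2})$ and that $\langle\cdot\rangle$ commutes with this construction in the way Proposition \ref{prop31} requires.

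The next step is to verify the 2-layered hypothesis. A vertex $a \in A$ is 2-layered in $G$ precisely when every neighbor of $a$ --- that is, every member set $S \in \mathcal{F}_{\scriptscriptstyle 1} \cup \mathcal{F}_{\scriptscriptstyle 2}$ with $a \in S$ --- has a pendant vertex attached. In the incidence graph, a member-set vertex $S$ has a pendant neighbor exactly when $S$ contains an element lying in no other member set, i.e. an element of frequency $1$. The hypothesis ``every $S \in \mathcal{F}_{\scriptscriptstyle 1} \cup \mathcal{F}_{\scriptscriptstyle 2}$ with $S \cap A \neq \emptyset$ contains an element of frequency $1$'' is exactly the assertion that every neighbor (in $G$) of every vertex of $A$ has a pendant vertex adjacent to it --- so every vertex of $A$ is 2-layered in $G$. (One small subtlety: ``frequency $1$'' must be interpreted within $\mathcal{F}_{\scriptscriptstyle 1} \cup \mathcal{F}_{\scriptscriptstyle 2}$, i.e. globally, and since the two families share no set this is the same as having degree $1$ among the set-vertices of $G$.)

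Having set this up, I would invoke Theorem \ref{thm42} with $C = G_{\scriptscriptstyle 1}$, $H = G_{\scriptscriptstyle 2}$: the common vertices $A$ lie in one bipartite class and are all 2-layered, so every rare vertex of $G_{\scriptscriptstyle 1}$ or of $G_{\scriptscriptstyle 2}$ remains rare in $G$. If $\langle\mathcal{F}_{\scriptscriptstyle 1}\rangle$ satisfies the conjecture, then by Proposition \ref{prop31} $G_{\scriptscriptstyle 1}$ has a rare vertex in its element class $U(\mathcal{F}_{\scriptscriptstyle 1})$; this vertex is then rare in $G$, and applying Proposition \ref{prop31} in the other direction gives an abundant element of $\langle\mathcal{F}_{\scriptscriptstyle 1}\cup\mathcal{F}_{\scriptscriptstyle 2}\rangle$. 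The case where $\langle\mathcal{F}_{\scriptscriptstyle 2}\rangle$ satisfies the conjecture is symmetric.

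The main obstacle is bookkeeping at the translation interface rather than any deep combinatorics: one must confirm that the incidence graph of $\mathcal{F}_{\scriptscriptstyle 1} \cup \mathcal{F}_{\scriptscriptstyle 2}$ is genuinely $G_{\scriptscriptstyle 1} \cup G_{\scriptscriptstyle 2}$ (this uses that no set is shared, so no two set-vertices get identified and no spurious edges appear), that isolated vertices do not cause trouble (elements of $A$ appearing in no set, if any, are excluded as usual), and that the notion of ``rare vertex in a fixed bipartite class'' is preserved by both directions of Proposition \ref{prop31}. Once these identifications are in place the result is immediate from Theorem \ref{thm42}.
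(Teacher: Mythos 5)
Your proposal is correct and follows exactly the route the paper intends: the paper states that the set versions are obtained directly by applying Proposition \ref{prop31} to the corresponding graph results, and your construction (taking $C$ and $H$ to be the incidence graphs of $\mathcal{F}_{\scriptscriptstyle 1}$ and $\mathcal{F}_{\scriptscriptstyle 2}$, identifying $V(C)\cap V(H)=A$, and matching the frequency-one hypothesis with the 2-layered condition before invoking Theorem \ref{thm42}) is precisely this translation, spelled out in more detail than the paper gives.
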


\begin{proposition}\label{prop48}
   Let \( G \) be a bipartite graph and \( v \in V(G) \) be such that there exists at least one pendant vertex adjacent to \( v \) and all the non-pendant neighbors of \( v \) are 2-layered. Then \( G \) satisfies UCC condition. Furthermore, the vertex $v$ and all of its neighbors are rare in $G$.
\end{proposition}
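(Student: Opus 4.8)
\textbf{Proof proposal for Proposition \ref{prop48}.}

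The plan is to realize $G$ as the union of a carefully chosen decomposition $\{C,H\}$ so that Theorem \ref{thm42} applies directly, with $v$ living in the component we control. Write $v$'s neighborhood as $N_{\scriptscriptstyle G}(v) = \{p_{1},\dots,p_{k}\} \cup \{u_{1},\dots,u_{\ell}\}$, where the $p_{i}$ are the pendant neighbors of $v$ (at least one exists by hypothesis) and the $u_{j}$ are the non-pendant neighbors, each of which is 2-layered. I would take $C$ to be the subgraph consisting of $v$ together with all its pendant neighbors $p_{1},\dots,p_{k}$ and the edges $vp_{i}$ — a star — and let $H$ be the subgraph obtained from $G$ by deleting those edges $vp_{i}$ (equivalently, $H = G - \{vp_{1},\dots,vp_{k}\}$, keeping all vertices). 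Then $\{C,H\}$ is an edge-disjoint decomposition with $V(C)\cap V(H) = \{v\}\cup\{p_{1},\dots,p_{k}\}$. However, the common vertices $p_{i}$ have no neighbor in $H$, so by the reduction preceding Theorem \ref{thm42} (removing a common vertex with empty neighborhood in one side) we are left with the single common vertex $[n]=\{v\}$, and we must check that $v$ is 2-layered \emph{in the pair $(C,H)$}: every vertex in $N_{\scriptscriptstyle C}(v) = \{p_{1},\dots,p_{k}\}$ must have a pendant neighbor — indeed $v$ itself must be shown to be pendant-adjacent to each $p_{i}$ within an appropriate subgraph, which holds since $p_{i}$ has degree $1$ in $G$ so $p_{i}$'s only neighbor is $v$, and correspondingly each vertex of $N_{\scriptscriptstyle G}(v)$ that matters for the 2-layered condition of Theorem \ref{thm42} is one of the $u_{j}$ or $p_{i}$; the $u_{j}$ are 2-layered by hypothesis and the $p_{i}$ are pendant. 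I would state this verification as the key claim: \emph{$v$ is 2-layered as a common vertex of the decomposition $\{C,H\}$}.

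Once the hypotheses of Theorem \ref{thm42} are verified, the conclusion is immediate: $C$ is a star $K_{1,k}$ with $k\geq 1$, and a star trivially satisfies Frankl's conjecture — its maximal stable sets are $\{v\}$ and $\{p_{1},\dots,p_{k}\}$, so $w_{\scriptscriptstyle C}=2$ and every vertex lies in exactly one of them, hence $w_{\scriptscriptstyle C}-2w_{\scriptscriptstyle C}(x)=0$ for all $x\in V(C)$; in particular every vertex of $C$ is rare in $C$. By Theorem \ref{thm42}, every rare vertex of $C$ remains rare in $G$. Since $v$ and all its pendant neighbors $p_{i}$ lie in $C$ and are rare there, they are rare in $G$, and in particular $G$ satisfies Frankl's conjecture. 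That $v$ and \emph{all} of its neighbors (including the non-pendant $u_{j}$) are rare in $G$ then follows from the pendant-vertex observation already recorded in the excerpt (``if a bipartite graph contains a pendant vertex, then its neighbor is rare'', cf. Result \ref{it5}): $v$ has a pendant neighbor, so $v$ is rare; and each $u_{j}$ is 2-layered, hence has a pendant neighbor, so each $u_{j}$ is rare as well — actually $v$'s rareness also follows directly from the Theorem as just argued, and the neighbors' rareness follows because each neighbor of $v$ either is a pendant $p_i$ (rare via $C$) or a 2-layered $u_j$ which has its own pendant neighbor making it the ``neighbor of a pendant vertex'' and hence rare.

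The main obstacle I anticipate is purely bookkeeping: making sure the decomposition $\{C,H\}$ genuinely satisfies the precise hypothesis of Theorem \ref{thm42} after the normalization step that strips common vertices with empty one-sided neighborhoods. One must be careful that after deleting the $p_{i}$ from the overlap, the single remaining common vertex $v$ still has a non-empty neighborhood in both $C$ and $H$ — true since $k\geq 1$ gives $N_{\scriptscriptstyle C}(v)\neq\emptyset$, and $v$ has at least one non-pendant neighbor $u_{j}$, or else some pendant neighbor... wait, if $v$ has \emph{only} pendant neighbors then $N_{\scriptscriptstyle H}(v)=\emptyset$ and the whole overlap vanishes, $G$ becoming disconnected as $C \sqcup H$; in that degenerate case one invokes Proposition \ref{prop1} instead, since the star component $C$ satisfies the conjecture. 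So the proof naturally splits into the case where $v$ has a non-pendant (necessarily 2-layered) neighbor, handled by Theorem \ref{thm42}, and the case where $v$ has only pendant neighbors, handled by Proposition \ref{prop1}. I would also double-check the bipartiteness is preserved (it is, since $C$ and $H$ are subgraphs of the bipartite $G$) and that the common vertices all lie in one bipartite class (automatic, as the overlap is contained in the closed star around $v$, whose ``spokes'' $p_i$ and center $v$ alternate classes — but the overlap reduces to just $\{v\}$, a single vertex, so this is vacuous).
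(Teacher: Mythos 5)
There is a genuine gap, and it lies exactly in your ``key claim.'' With your decomposition the normalized overlap is $[n]=\{v\}$, so Theorem \ref{thm42} requires $v$ itself to be 2-layered, i.e.\ \emph{every} vertex of $N_{G}(v)$ --- including each pendant neighbor $p_{i}$ --- must be adjacent to a pendant vertex. But the only neighbor of $p_{i}$ is $v$, and $v$ is in general not pendant; moreover, the hypothesis that the non-pendant neighbors $u_{j}$ are 2-layered does \emph{not} say that the $u_{j}$ have pendant neighbors (2-layeredness of $u_{j}$ is a condition on the neighbors of $u_{j}$, not on $u_{j}$ itself). A concrete instance: the path $p\,v\,u\,w\,q$ satisfies the hypotheses of Proposition \ref{prop48} at $v$ (pendant neighbor $p$; the unique non-pendant neighbor $u$ is 2-layered since $v$ and $w$ each have a pendant neighbor), yet $v$ is not 2-layered because $p$ has no pendant neighbor, and $u$ has no pendant neighbor either. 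So Theorem \ref{thm42} cannot be invoked for your pair $(C,H)$, and your fallback arguments do not repair this: the inference ``$u_{j}$ is 2-layered, hence has a pendant neighbor, hence is rare'' is false (see $u$ in the path above), and once the theorem is unavailable nothing in your argument establishes rareness of the pendant neighbors $p_{i}$, which the proposition also asserts.

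The fix is a different decomposition, which is the one the paper uses: let $C$ be the star consisting of $v$ together with \emph{all} of its neighbors (i.e.\ all edges incident to $v$; this is a star because $G$ is bipartite), and let $H$ be the subgraph induced on $V(G)\setminus(P\cup\{v\})$, where $P$ is the set of pendant neighbors of $v$. Then $\{C,H\}$ is an edge-disjoint decomposition of $G$, and $V(C)\cap V(H)=N_{G}(v)\setminus P$ consists precisely of the non-pendant neighbors of $v$, which lie in one bipartite class and are 2-layered \emph{by hypothesis} --- so Theorem \ref{thm42} applies with no extra verification. Since every vertex of a star is rare in the star, $v$, the $p_{i}$ and the $u_{j}$ are all rare in $C$, hence rare in $G$. (Your observation that the case where $v$ has only pendant neighbors can be handled via Proposition \ref{prop1} is fine, but it is not needed under the paper's choice of $C$ and $H$.)
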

\begin{proof}
    Define the set of pendant neighbors of \(v\) as 
\(P = \{u \in N_{\scriptscriptstyle G}(v) \mid d(u) = 1\},
\)where \(d(u)\) denotes the degree of \(u\). Let $I$ be the subgraph induced by vertex $v$, while $H$ is the subgraph induced by the vertex subset $V(G) \setminus (P \cup \{v\})$. Clearly, $V(H) \cap V(I) = N_{\scriptscriptstyle G}(v) \setminus P$, which corresponds to the non-pendant neighbors of $v$. 

     Observe that all vertices in $N(v) \setminus P$ are 2-layered vertices, and $\{I, H\}$ forms a decomposition of $G$. It is important to note that $I$ is a star graph, implying that all of its vertices are rare within $I$. Consequently, by applying Theorem \ref{thm42}, we can conclude that both vertex $v$ and its neighbors in $G$ are rare in $G$, thereby establishing that $G$ satisfies the conjecture.
\end{proof}

\begin{proposition}[Set version of Proposition \ref{prop48}]
   Let $\mathcal{F}$ be a family of sets. Consider a member set $A \in \mathcal{F}$ that contains at least one element with frequency one. Suppose that for every set $S \in \mathcal{F}$, where $S \cap A \neq \emptyset$, there exists an element within $S$ that has frequency one. Under these conditions, the family $\langle\mathcal{F}\rangle$ satisfies the UCC condition, with all elements of $A$ being abundant.
\end{proposition}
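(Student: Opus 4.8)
The plan is to derive this as a direct translation of Proposition \ref{prop48} via the dictionary established in Proposition \ref{prop31}. First I would build the incidence graph $G$ of $\mathcal{F}$: the bipartite graph with classes $X = U(\mathcal{F})$ and $Y = \mathcal{F}$, where $S \in Y$ is joined to each of its elements $x \in X$. Under this correspondence an element $x \in U(\mathcal{F})$ has frequency one in $\mathcal{F}$ exactly when the corresponding vertex $x \in X$ is a pendant vertex of $G$ (its unique neighbor being the unique member set containing it). Also, $\langle \mathcal{F} \rangle = \langle \mathcal{F}^{\scriptscriptstyle X} \rangle$ since $\mathcal{F}^{\scriptscriptstyle X} = \{N(S) : S \in Y\} = \mathcal{F}$, so by Proposition \ref{prop31} an element is abundant in $\langle \mathcal{F} \rangle$ if and only if the corresponding vertex in $X$ is rare in $G$.

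Next I would check that the hypotheses of Proposition \ref{prop48} hold for the vertex $v := A \in Y$. The set $A$ containing an element of frequency one means $v$ has a pendant vertex adjacent to it in $G$. A non-pendant neighbor of $v$ is an element $x \in A \cap X$ with frequency at least $2$; I must show such $x$ is 2-layered, i.e. every vertex in $N_{\scriptscriptstyle G}(x)$ has a pendant vertex adjacent to it. But $N_{\scriptscriptstyle G}(x)$ is precisely the collection of member sets $S \in \mathcal{F}$ containing $x$, and each such $S$ satisfies $S \cap A \neq \emptyset$ (since $x \in S \cap A$), so by hypothesis $S$ contains an element of frequency one — equivalently, the vertex $S$ has a pendant vertex adjacent to it in $G$. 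Hence $x$ is 2-layered, and Proposition \ref{prop48} applies to $v = A$.

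The conclusion of Proposition \ref{prop48} is that $G$ satisfies Frankl's conjecture and that $v$ together with all its neighbors in $G$ are rare. Translating back: "$G$ satisfies Frankl's conjecture" means there is a rare vertex in each class; in particular there is a rare vertex in $X$, which by Proposition \ref{prop31} yields an abundant element of $\langle \mathcal{F} \rangle$, so $\langle \mathcal{F} \rangle$ satisfies the union closed conjecture. Moreover "all neighbors of $v$ are rare" means every vertex in $N_{\scriptscriptstyle G}(A) = A$ (viewed inside $X$) is rare in $G$, hence by Proposition \ref{prop31} every element of $A$ is abundant in $\langle \mathcal{F} \rangle$. One should also address the harmless degenerate cases (isolated vertices of $X$, i.e. elements of frequency zero, which do not arise; and the case $A = \emptyset$ or $\mathcal{F} = \{\emptyset\}$, which can be excluded or handled trivially).

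Since Proposition \ref{prop48} is already proved, there is no genuine combinatorial obstacle here; the only thing requiring care is verifying that the incidence-graph translation sends "frequency one" to "pendant," "set containing an element of frequency one" to "vertex adjacent to a pendant," and "abundant" to "rare" in the correct bipartite class, and that the universe of $\mathcal{F}^{\scriptscriptstyle X}$ really is $X$ so that Proposition \ref{prop31} is applicable. The mild subtlety to watch is that Proposition \ref{prop31} is phrased for vertices of class $X$ being rare; one must make sure the pendant vertices of $G$ produced by frequency-one elements lie in $X$ (they do, as they are elements of $U(\mathcal{F})$), so that the hypothesis and conclusion of Proposition \ref{prop48} land on the $X$-side as needed.
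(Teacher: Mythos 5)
Your proposal is correct and is essentially the paper's own argument: the paper derives this set version directly from Proposition \ref{prop48} via the incidence-graph dictionary of Proposition \ref{prop31}, exactly as you do (frequency-one elements become pendant vertices in $X$, the set $A$ becomes the vertex $v\in Y$ whose non-pendant neighbors you verify to be 2-layered, and rareness of $v$'s neighbors translates back to abundance of the elements of $A$). Your spelled-out verification of the translation, including the degenerate cases, is a faithful elaboration of what the paper leaves implicit.
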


\begin{corollary}\label{cor49}
     Let $G$ be a bipartite graph with no isolated vertices, in which every vertex in one of its bipartite classes is adjacent to at least one pendant vertex. Then, $G$ satisfies UCC condition. In fact, all the vertices of $G$ are rare.
\end{corollary}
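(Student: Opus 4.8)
The plan is to reduce Corollary~\ref{cor49} directly to Proposition~\ref{prop48} by iterating the latter over the vertices of the distinguished bipartite class. Call the two classes $X$ and $Y$, and assume every vertex of $Y$ is adjacent to at least one pendant vertex. First I would observe that this hypothesis is inherited by every relevant substructure: if $y \in Y$, then all neighbours of $y$ lie in $X$, so I need them to be $2$-layered, i.e. each $x \in N_G(y)$ must have a pendant neighbour. But a pendant neighbour of $x$ lies in $Y$, and \emph{every} vertex of $Y$ has a pendant neighbour in $X$ by hypothesis — so I must be slightly careful about the direction. The cleanest route: fix any $y \in Y$. Its pendant neighbours (vertices of degree $1$ in $X$ adjacent to $y$) play the role of $P$ in Proposition~\ref{prop48}. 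Its non-pendant neighbours $x \in X$ each need to be $2$-layered, meaning every $y' \in N_G(x)$ has a pendant vertex adjacent to it — and this is exactly the standing hypothesis, since $N_G(x) \subseteq Y$. Hence Proposition~\ref{prop48} applies to $G$ with the chosen vertex $v = y$, giving that $y$ and all of its neighbours are rare in $G$, and that $G$ satisfies Frankl's conjecture.

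The remaining point is to upgrade ``$y$ and its neighbours are rare'' to ``\emph{all} vertices of $G$ are rare''. Since $G$ has no isolated vertices, every vertex of $X$ lies in $N_G(y)$ for some $y \in Y$, so applying the previous paragraph to a suitable $y$ shows each $x \in X$ is rare. Likewise every $y \in Y$ is rare by taking $v = y$ directly. I should also address connectedness: Proposition~\ref{prop48} (via Theorem~\ref{thm42}) and Proposition~\ref{prop1} already handle disconnected graphs, and in any case the hypothesis is preserved on each connected component, so there is no loss of generality. One subtlety worth a sentence: when I apply Proposition~\ref{prop48} I must make sure $v=y$ really has a pendant neighbour and that the induced star $C$ is nontrivial — this is immediate from the hypothesis that $y$ has a pendant neighbour, so $\deg(y) \geq 1$ and $C$ has at least one edge.

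The main obstacle I anticipate is purely bookkeeping rather than conceptual: verifying the $2$-layered condition in the correct direction. The definition of $2$-layered for $v$ asks that every vertex of $N_G(v)$ has a pendant neighbour; here $N_G(v) = N_G(y) \subseteq X$, and I am given a pendant-adjacency condition on $Y$, not on $X$. I need to check that the pendant neighbour of $x \in X$ is a vertex of $Y$ adjacent only to $x$ — which is automatic since pendant means degree $1$ — and that such a vertex exists, which is \emph{not} directly the hypothesis on $Y$. Re-reading: the hypothesis ``every vertex in one of its bipartite classes is adjacent to at least one pendant vertex'' must be read as the class $Y$ having this property, and then for $x \in X$ to be $2$-layered I would need every neighbour of $x$, i.e. every relevant $y' \in Y$, to have a pendant neighbour — which \emph{is} the hypothesis. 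So the condition does hold; the care is only in not conflating ``$x$ has a pendant neighbour'' with ``$x$'s neighbours have pendant neighbours.'' Once this is pinned down, the corollary follows in a couple of lines.
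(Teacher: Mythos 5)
Your argument is correct and is essentially the paper's intended derivation: Corollary~\ref{cor49} follows by applying Proposition~\ref{prop48} with $v$ ranging over the distinguished bipartite class, where the 2-layered condition on the non-pendant neighbours of $v$ is exactly the standing hypothesis on that class, and the absence of isolated vertices guarantees every vertex of the other class occurs as a neighbour of some such $v$, so all vertices are rare. Your extra care about the direction of the 2-layered condition is exactly the right check, and nothing further is needed.
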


\begin{corollary}[Set version of Corollary \ref{cor49}]
     Let $\mathcal{F}$ be a family of sets where each of its member sets contains an element of frequency 1. Then $\langle\mathcal{F}\rangle$ satisfies the UCC condition. 
\end{corollary}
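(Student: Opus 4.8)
The plan is to derive this from Corollary~\ref{cor49} via the bridge Proposition~\ref{prop31}. The idea is that the hypothesis ``every member set contains an element of frequency $1$'' is exactly the incidence-graph translation of ``every vertex in one bipartite class is adjacent to a pendant vertex.'' So first I would set up the incidence graph $G$ of $\mathcal{F}$, which is bipartite with classes $\mathcal{F}$ and $U(\mathcal{F})$, with $S \in \mathcal{F}$ adjacent to $x \in U(\mathcal{F})$ iff $x \in S$. After discarding any isolated vertices (which, as the paper notes, do not affect the conjecture), I would take the class corresponding to the member sets of $\mathcal{F}$ as the ``one bipartite class'' in Corollary~\ref{cor49}.

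Next I would verify the pendant-vertex condition. Fix a member set $S \in \mathcal{F}$, viewed as a vertex of $G$. By hypothesis $S$ contains an element $x$ of frequency $1$, i.e.\ $x$ lies in exactly one member set, namely $S$. In the incidence graph this means the vertex $x \in U(\mathcal{F})$ has degree exactly $1$, with its unique neighbor being $S$. Hence $x$ is a pendant vertex adjacent to $S$. Since $S$ was arbitrary, every vertex in the $\mathcal{F}$-class of $G$ is adjacent to at least one pendant vertex, so the hypothesis of Corollary~\ref{cor49} is met.

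Then I would apply Corollary~\ref{cor49}: $G$ satisfies Frankl's conjecture, and in fact every vertex of $G$ is rare. In particular every vertex $x$ in the class $U(\mathcal{F}^{\scriptscriptstyle \mathcal{F}})$—which, after removing isolated vertices, is just $U(\mathcal{F})$—is rare in $G$. Finally I would invoke Proposition~\ref{prop31}, noting that the incidence family $\mathcal{F}^{\scriptscriptstyle U(\mathcal{F})}$ of the class $U(\mathcal{F})$ equals $\mathcal{F}$ itself (each $S \in \mathcal{F}$ is $N_G(S)$), so $\langle \mathcal{F}^{\scriptscriptstyle U(\mathcal{F})} \rangle = \langle \mathcal{F} \rangle$; Proposition~\ref{prop31} then says a vertex $x \in U(\mathcal{F})$ is rare in $G$ iff it is abundant in $\langle \mathcal{F} \rangle$. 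Since all such $x$ are rare, every element of $U(\mathcal{F})$ is abundant in $\langle \mathcal{F} \rangle$, which certainly gives an abundant element and establishes the union closed conjecture for $\langle \mathcal{F} \rangle$.

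The proof is essentially a dictionary translation, so there is no deep obstacle; the only points requiring care are bookkeeping ones. One must handle the case $\mathcal{F} = \{\emptyset\}$ (or $U(\mathcal{F}) = \emptyset$) separately or note it is excluded, since then there is nothing to prove. One should also be careful about which bipartite class plays the role of $X$ in the incidence/duality correspondence: here the frequency-$1$ condition constrains the $\mathcal{F}$-side, producing pendants \emph{on} the $U(\mathcal{F})$-side, and it is the $U(\mathcal{F})$-side vertices whose rareness converts (via Proposition~\ref{prop31}) into abundance of genuine elements of $\langle\mathcal{F}\rangle$. Keeping those two sides straight is the whole content of the argument.
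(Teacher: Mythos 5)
Your proposal is correct and matches the paper's intended derivation: the paper obtains this corollary exactly by passing to the incidence graph, noting that frequency-$1$ elements become pendant vertices attached to every vertex of the $\mathcal{F}$-class, applying Corollary \ref{cor49}, and translating rareness of the $U(\mathcal{F})$-side vertices back into abundance in $\langle\mathcal{F}\rangle$ via Proposition \ref{prop31}. Your extra bookkeeping remarks (excluding $\mathcal{F}=\{\emptyset\}$ and keeping the two bipartite classes straight) are sensible and consistent with the paper.
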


\begin{proposition}\label{prop411}
   Suppose we have a collection of bipartite graphs $G_{\scriptscriptstyle 1}, G_{\scriptscriptstyle 2}, \ldots, G_{\scriptscriptstyle k}$, where no two graphs share any common vertices. Additionally, let $v_{\scriptscriptstyle 1}, v_{\scriptscriptstyle 2}, \ldots, v_{\scriptscriptstyle k}$ denote the 2-layered vertices within $G_{\scriptscriptstyle 1}, G_{\scriptscriptstyle 2}, \ldots, G_{\scriptscriptstyle k}$, respectively. By merging these $k$ vertices, we obtain a new graph $G$. Under this construction, any vertex that is rare in one of the original graphs will remain rare in the composite graph $G$.
\end{proposition}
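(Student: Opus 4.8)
The plan is to reduce Proposition \ref{prop411} to repeated application of Theorem \ref{thm42}. The key observation is that merging the vertices $v_1, \dots, v_k$ produces a graph $G$ that admits a natural decomposition into the pieces coming from the original graphs, and the shared vertices of these pieces are exactly the merged vertices, which are 2-layered by hypothesis. So the strategy is: build $G$ one graph at a time, and at each stage invoke Theorem \ref{thm42} to push rarity forward.

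More concretely, I would first handle the case $k = 2$. Let $G_1, G_2$ be the two bipartite graphs with 2-layered vertices $v_1 \in V(G_1)$, $v_2 \in V(G_2)$, and let $G$ be the graph obtained by identifying $v_1$ with $v_2$ into a single vertex $v$. I need two preliminary checks. First, $G$ is bipartite: since $v_1$ and $v_2$ are single vertices, each $G_i$ can be $2$-coloured so that $v_i$ receives a prescribed colour, and gluing the two colourings along $v$ yields a proper $2$-colouring of $G$ (there is no parity obstruction because the identification is at a single vertex, so no new cycles of odd length are created — any cycle through $v$ in $G$ decomposes into a closed walk in $G_1$ and one in $G_2$, each of even length). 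Second, $\{G_1, G_2\}$ is a decomposition of $G$ in the sense of the paper: they are edge-disjoint (no shared vertices originally means no shared edges), and their union is $G$. Their common vertex set is $\{v\}$, a single vertex lying in one bipartite class, and $v$ is 2-layered in $G$ because every neighbour of $v$ in $G$ is a neighbour of $v_1$ in $G_1$ or of $v_2$ in $G_2$, and in either case has a pendant vertex attached (this pendant stays pendant in $G$ since merging only touches $v$). Hence Theorem \ref{thm42} applies with $C = G_1$, $H = G_2$: any vertex rare in $G_1$ (or in $G_2$, by symmetry) remains rare in $G$.

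For general $k$, I would proceed by induction. Set $G^{(2)} = G_1 \cup_{v} G_2$ as above, and having formed $G^{(j)}$ by merging $v_1, \dots, v_j$ (all identified to a single vertex $v$), note that $v$ remains 2-layered in $G^{(j)}$ and lies in one bipartite class; then $G^{(j+1)}$ is obtained from $G^{(j)}$ and $G_{j+1}$ by identifying $v$ with $v_{j+1}$, and the same two checks (bipartiteness, valid decomposition with shared vertex set $\{v\}$ which is 2-layered) let me apply Theorem \ref{thm42} again with $C = G^{(j)}$, $H = G_{j+1}$. This shows any vertex rare in $G^{(j)}$ or in $G_{j+1}$ is rare in $G^{(j+1)}$; combined with the inductive hypothesis, any vertex rare in one of $G_1, \dots, G_{j+1}$ is rare in $G^{(j+1)}$. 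Taking $j+1 = k$ gives the claim, since $G = G^{(k)}$.

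The main obstacle is not any deep argument but a bookkeeping subtlety: I must make sure the hypotheses of Theorem \ref{thm42} are genuinely met at each induction step, in particular that the merged vertex $v$ stays 2-layered after repeated gluing and that the ``2-layered'' condition — every neighbour of $v$ has an adjacent pendant — is not spoiled when later merges could in principle make a former pendant vertex non-pendant. Here the point to verify carefully is that the only vertex ever affected by a merge is $v$ itself, so a pendant vertex attached to some neighbour $u$ of $v$ never gets a new edge (it is not one of the $v_i$, and $u$ is not $v$), hence stays pendant; and a neighbour $u$ of $v$ acquires no new non-pendant obligations. Once this invariant is stated cleanly, each application of Theorem \ref{thm42} is immediate and the induction closes.
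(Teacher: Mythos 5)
Your proposal is correct and takes essentially the same route as the paper: the merged vertex $v$ is 2-layered in $G$ and the original graphs give a decomposition meeting only at $v$, so Theorem~\ref{thm42} transfers rarity. The paper does this in a single application (with $C$ the graph containing the rare vertex and $H$ the merge of the remaining graphs), whereas you build $G$ up by induction and verify the bipartiteness, edge-disjointness, and pendant-preservation details the paper leaves implicit --- an organizational difference only.
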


\begin{proof}
   Let $v$ be the vertex in $G$ resulting from the merging of all $k$ vertices. It is evident that $v$ is a 2-layered vertex, allowing us to apply Theorem \ref{thm42}.
\end{proof}

As an illustration, Figure \ref{fig:c} provides an example of Corollary \ref{cor49}, while Figure \ref{fig:p} demonstrates an example of Proposition \ref{prop411}.

\begin{proposition}[Set version of Proposition \ref{prop411}]
   Let $\mathcal{F}_{\scriptscriptstyle 1}, \mathcal{F}_{\scriptscriptstyle 2}, \ldots, \mathcal{F}_{\scriptscriptstyle k}$ be a collection of families of sets such that their universes are mutually exclusive. From each family $\mathcal{F}_{\scriptscriptstyle i}$, choose an element $a_{\scriptscriptstyle i}$ such that every member set containing $a_{\scriptscriptstyle i}$ includes an element with frequency 1. Now, set $a = a_{\scriptscriptstyle 1} = a_{\scriptscriptstyle 2} = \ldots = a_{\scriptscriptstyle k}$ and define $\mathcal{F} = \bigcup_{i=1}^{k} \mathcal{F}_{\scriptscriptstyle i}$. Under this construction, all abundant elements in any of the $\langle \mathcal{F}_{\scriptscriptstyle i} \rangle$ remain abundant in $\langle \mathcal{F} \rangle$.
\end{proposition}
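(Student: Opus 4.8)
The plan is to transfer the statement to its graph counterpart, Proposition~\ref{prop411}, through the dictionary of Section~\ref{sec3}. For each $i$ I would form the incidence graph $G_{\scriptscriptstyle i}$ of $\mathcal{F}_{\scriptscriptstyle i}$: the bipartite graph whose classes are $X_{\scriptscriptstyle i}=U(\mathcal{F}_{\scriptscriptstyle i})$ and $Y_{\scriptscriptstyle i}=\mathcal{F}_{\scriptscriptstyle i}$, with an edge $(S,x)$ exactly when $x\in S$. Then the incidence family of the class $X_{\scriptscriptstyle i}$ is precisely $\mathcal{F}_{\scriptscriptstyle i}$, so the union-closed family it generates is $\langle\mathcal{F}_{\scriptscriptstyle i}\rangle$, and since no element of $U(\mathcal{F}_{\scriptscriptstyle i})$ is isolated, the universe of that incidence family is $X_{\scriptscriptstyle i}$. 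Because the universes $U(\mathcal{F}_{\scriptscriptstyle i})$ are mutually exclusive, the member sets of distinct families are pairwise distinct, and the graphs $G_{\scriptscriptstyle 1},\dots,G_{\scriptscriptstyle k}$ may be taken pairwise vertex-disjoint.

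The crucial step is the translation of the hypothesis into the $2$-layered condition. In $G_{\scriptscriptstyle i}$, a vertex $x\in X_{\scriptscriptstyle i}$ is pendant exactly when the corresponding element has frequency $1$ in $\mathcal{F}_{\scriptscriptstyle i}$, and a vertex $S\in Y_{\scriptscriptstyle i}$ has a pendant neighbour exactly when the member set $S$ contains an element of frequency $1$. Hence the assumption that every $S\in\mathcal{F}_{\scriptscriptstyle i}$ with $a_{\scriptscriptstyle i}\in S$ contains an element of frequency $1$ says precisely that every vertex of $N_{\scriptscriptstyle G_{i}}(a_{\scriptscriptstyle i})$ is adjacent to a pendant vertex; that is, $a_{\scriptscriptstyle i}$ is $2$-layered in $G_{\scriptscriptstyle i}$.

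Next I would merge $a_{\scriptscriptstyle 1},\dots,a_{\scriptscriptstyle k}$ into a single vertex $a$, obtaining a graph $G$. Since all the merged vertices lie on the element side, $G$ remains bipartite, with classes $U(\mathcal{F})$ (the union of the $U(\mathcal{F}_{\scriptscriptstyle i})$ with the $a_{\scriptscriptstyle i}$ identified) and $\mathcal{F}=\bigcup_{i}\mathcal{F}_{\scriptscriptstyle i}$; inspecting the edge set shows $G$ is exactly the incidence graph of $\mathcal{F}$, so the incidence family of its element class is $\mathcal{F}$ itself and $\langle\mathcal{F}\rangle$ is the union-closed family generated by it. By Proposition~\ref{prop411}, every vertex that is rare in some $G_{\scriptscriptstyle i}$ remains rare in $G$. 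Composing this with Proposition~\ref{prop31}, an element $x$ is abundant in $\langle\mathcal{F}_{\scriptscriptstyle i}\rangle$ if and only if $x$ is rare in $G_{\scriptscriptstyle i}$, which forces $x$ to be rare in $G$, which in turn forces $x$ to be abundant in $\langle\mathcal{F}\rangle$. This is the desired conclusion.

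I expect the only delicate part to be bookkeeping rather than genuine difficulty. One must verify that identifying the vertices $a_{\scriptscriptstyle i}$ yields exactly the incidence graph of $\mathcal{F}$, with no extra edges appearing between the blocks (this is where mutual exclusiveness of the universes enters) and with the bipartition preserved (which holds because each merged vertex sits in the element class). The routine degeneracies, such as empty families, sets shared across families, or isolated vertices, should be set aside at the outset as irrelevant to the conjecture, in keeping with the neighbouring propositions of this section.
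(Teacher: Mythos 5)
Your proposal is correct and follows exactly the route the paper intends: the paper derives this set version directly from Proposition \ref{prop411} via the dictionary of Proposition \ref{prop31}, which is precisely your construction (incidence graphs, frequency-1 elements as pendant vertices, the hypothesis on $a_{\scriptscriptstyle i}$ as the 2-layered condition, merging, then translating rare back to abundant). You have simply written out the bookkeeping that the paper leaves implicit.
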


\section{Future Research}\label{secf}

Several graph classes, such as subcubic graphs, chordal bipartite graphs, and series-parallel graphs, are already known to satisfy the Union Closed Sets Conjecture. However, many families still remain unresolved; for example, the conjecture has not yet been established even for the classes of regular bipartite graphs, planar bipartite graphs, and bipartite graphs of treewidth three. This represents an important direction for further exploration.

In this paper, we identified some additional graph classes that satisfy the conjecture, but more significantly, we developed a proof technique that may inspire further work. Our method crucially requires that the common vertices in the decomposition be 2-layered. A natural next step is to investigate situations where some, or even all, of these vertices fail to be 2-layered. For instance, one may ask whether there exist particular cases in which the number of maximal stable sets equals the product of the numbers of maximal stable sets of the decompositions, despite the lack of the 2-layered property. Establishing such results could pave the way toward a more general theorem.  

Another promising perspective is to study the conjecture by systematically decomposing graphs into two or more subgraphs and analyzing the structure of maximal stable sets within each piece. Even in cases where the common vertices are not 2-layered, one can investigate whether useful bounds can still be obtained using ideas similar to those presented here. To the best of our knowledge, the approach of analyzing the Union Closed Sets Conjecture through graph decompositions has not been explored in the literature. Therefore, pursuing this direction may open new avenues for understanding the conjecture.

\section{Acknowledgments}

I would like to express my sincere gratitude to my adviser, Prof. Rogers Mathew, for his invaluable guidance and support.

\bibliographystyle{abbrv}
\bibliography{references}

\begin{thebibliography}{1}

\bibitem{2/3}
I.~Balla, B.~Bollob{\'a}s, and T.~Eccles.
\newblock Union-closed families of sets.
\newblock {\em Journal of Combinatorial Theory, Series A}, 120(3):531--544, 2013.

\bibitem{11}
H.~Bruhn, P.~Charbit, O.~Schaudt, and J.~A. Telle.
\newblock The graph formulation of the union-closed sets conjecture.
\newblock {\em European Journal of Combinatorics}, 43:210--219, 2015.

\bibitem{2m}
V.~Falgas-Ravry.
\newblock Minimal weight in union-closed families.
\newblock {\em Electron. J. Combin}, 18(1), 2011.

\bibitem{gil}
J.~Gilmer.
\newblock A constant lower bound for the union-closed sets conjecture.
\newblock {\em arXiv preprint arXiv:2211.09055}, 2022.

\bibitem{50c}
G.~Lo~Faro et~al.
\newblock Union-closed conjecture: improved bounds.
\newblock {\em JCMCC. Journal of Combinatorial Mathematics and Combinatorial Computing}, 16:97--102, 1994.

\bibitem{sin}
T.~Nishimura and S.~Takahashi.
\newblock Around {F}rankl conjecture.
\newblock {\em Sci. Rep. Yokohama Nat. Univ. Sect. I Math. Phys. Chem.}, 43:15--23, 1996.

\bibitem{50}
B.~Poonen.
\newblock Union-closed families.
\newblock {\em Journal of Combinatorial Theory, Series A}, 59(2):253--268, 1992.

\bibitem{50b}
I.~Roberts and J.~Simpson.
\newblock A note on the union-closed sets conjecture.
\newblock {\em Australasian Journal of Combinatorics}, 47:265--267, 2010.

\bibitem{12el}
B.~Vuckovic and M.~Zivkovic.
\newblock The 12 element case of {F}rankl’s conjecture.
\newblock {\em preprint}, 2012.

\end{thebibliography}

\end{document}